\newtheorem{thm}{Theorem}[section]
\newtheorem{cor}[thm]{Corollary}
\newtheorem{defin}[thm]{Definition}
\newtheorem{lemma}[thm]{Lemma}
\newtheorem{prop}[thm]{Proposition}
\newtheorem{conj}{Conjecture}
\newcommand{\sss}{\mbox{$\sigma$}}
\newcommand{\bdd}{\mbox{$\partial$}}
\def\zed{{\mathbb Z}}
\def\R{{\mathbb R}}
\def\im{\mathop{\rm Im}\nolimits}
\begin{document}

\title{Fibered knots and Property 2R, II}

\author{Robert E. Gompf}
\address{\hskip-\parindent
Robert Gompf\\
Mathematics Department \\
University of Texas at Austin\\
1 University Station C1200\\
Austin TX 78712-0257, USA}
\email{gompf@math.utexas.edu}

\author{Martin Scharlemann}
\address{\hskip-\parindent
        Martin Scharlemann\\
        Mathematics Department\\
        University of California\\
        Santa Barbara, CA USA}
\email{mgscharl@math.ucsb.edu}

\thanks{Research partially supported by National Science Foundation grants. Thanks also to Mike Freedman and Microsoft's Station Q for rapidly organizing a mini-conference on this general topic.}

\date{\today}

\begin{abstract} Following \cite{ST}, a knot $K \subset S^3$ is said to have {\it Property nR} if, whenever $K$ is a component of an n-component link $L \subset S^3$ and some integral surgery on $L$ produces  $\#_{n} (S^{1} \times S^{2})$, there is a sequence of handle slides on $L$ that converts $L$ into a $0$-framed unlink.  The Generalized Property R Conjecture is that all knots have Property nR for all $n \geq 1$.

The simplest plausible counterexample could be the square knot.  Exploiting the remarkable symmetry of the square knot, we characterize all two-component links that contain it and which surger to $\#_{2} (S^{1} \times S^{2})$.  We argue that at least one such link probably cannot be reduced to the unlink by a series of handle-slides, so the square knot probably does not have Property 2R.  This example is based on a classic construction of the first author.

On the other hand, the square knot may well satisfy a somewhat weaker property, which is still useful in $4$-manifold theory.  For the weaker property, copies of canceling Hopf pairs may be added to the link before the handle slides and then removed after the handle slides. Beyond our specific example, we discuss the mechanics of how addition and later removal of a Hopf pair can be used to reduce the complexity of a surgery description.
\end{abstract}

\maketitle

\section{Introduction: Review of Property nR}

We briefly review the background material from \cite{ST}, to which we refer the reader for details.  Recall the famous Property R theorem, proven in a somewhat stronger form by David Gabai \cite{Ga}:

\begin{thm}[Property R] \label{thm:PropR} If $0$-framed surgery on a knot $K \subset S^3$ yields $S^1 \times S^2$ then $K$ is the unknot.
\end{thm}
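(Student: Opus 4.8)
The plan is to prove the substantive implication, that if $K$ is nontrivial then $0$-framed surgery $S^3_0(K)$ is not homeomorphic to $S^1\times S^2$; the converse is the classical observation that $0$-surgery on the unknot produces $S^1\times S^2$. So suppose $K$ is nontrivial and let $F$ be a minimal-genus Seifert surface for $K$, of genus $g$. Since a genus-$0$ Seifert surface is a disk, minimality and nontriviality force $g\geq 1$. The $0$-framed surgery glues a solid torus $V$ to the knot exterior so that the meridian disk $D$ of $V$ is attached along the longitude $\bdd F$; hence $\hF = F\cup_{\bdd F} D$ is a closed orientable surface of genus $g\geq 1$ in $S^3_0(K)$, and a Mayer--Vietoris computation shows that $[\hF]$ generates $H_2(S^3_0(K);\zed)\cong\zed$.

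The core of the argument is Gabai's sutured manifold theory, which I would invoke as the key external input. Cutting the knot exterior along the minimal-genus surface $F$ produces a taut sutured manifold; by Gabai's theorem this sutured manifold admits a sutured manifold hierarchy, and a hierarchy can be used to build a taut (co-orientable, Reebless) foliation of $S^3_0(K)$ having $\hF$ as a compact leaf. Two standard consequences follow, and either one alone suffices. First, by the theorems of Novikov and Rosenberg, a closed $3$-manifold carrying a taut foliation with no sphere leaf is irreducible; since the constructed foliation has $\hF$ (of genus $\geq 1$) among its leaves and no sphere leaves, $S^3_0(K)$ is irreducible. Second, a leaf of a taut foliation is Thurston-norm minimizing in its homology class, so $\hF$ is incompressible in $S^3_0(K)$. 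Equivalently, one may simply quote Gabai's corollary that $S^3_0(K)$ is irreducible for every nontrivial $K$.

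The contradiction then comes from the elementary topology of $S^1\times S^2$. It is reducible --- the sphere $\{\mathrm{pt}\}\times S^2$ is essential --- so irreducibility of $S^3_0(K)$ already rules out $S^3_0(K)\cong S^1\times S^2$. If one prefers the incompressibility route: $\pi_1(S^1\times S^2)\cong\zed$, and this group contains no subgroup isomorphic to the fundamental group of a closed surface of positive genus (which is $\zed^2$ when $g=1$ and nonabelian when $g\geq 2$), so $S^1\times S^2$ contains no closed incompressible surface of positive genus. Either way the surface $\hF$ with $g\geq 1$ cannot exist, so $g=0$ and $K$ is the unknot.

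The genuine obstacle is the middle step: producing the taut foliation (equivalently, proving that $S^3_0(K)$ is irreducible for nontrivial $K$) is exactly Gabai's theorem and relies on the full machinery of sutured manifold decompositions; everything else is routine $3$-manifold topology. For completeness I would note that a logically independent modern proof is available via Heegaard Floer homology --- Ozsv\'ath--Szab\'o's genus-detection theorem together with the computation of the Floer homology of $S^1\times S^2$, supplemented by fiberedness detection --- though this machinery is not needed here and Gabai's argument is the natural one to cite in this review.
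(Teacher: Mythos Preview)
The paper does not give its own proof of this theorem; it simply attributes Property~R to Gabai \cite{Ga} and proceeds. Your outline is a faithful sketch of Gabai's original argument via sutured manifold hierarchies and taut foliations, correctly isolating the one deep step (existence of the taut foliation with $\hF$ as a leaf, hence irreducibility of $S^3_0(K)$) from the elementary endgame. So there is nothing to compare: you have supplied exactly the argument the paper is content to cite.
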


There is a natural way of trying to generalize Theorem \ref{thm:PropR} to links in $S^3$.  First recall a small bit of $4$-manifold handlebody theory \cite{GS}.  Suppose $L$ is a link in an oriented $3$-manifold $M$ and each component of $L$ is assigned a framing, that is a preferred choice of cross section to the normal circle bundle of the component in $M$. For example, if $M = S^3$, a framing on a knot is determined by a single integer, the algebraic intersection of the preferred cross-section with the longitude of the knot.  (In an arbitrary $3$-manifold $M$ a knot may not have a naturally defined longitude.)  Surgery on the link via the framing is standard Dehn surgery (though restricted to integral coefficients):  a regular neighborhood of each component is removed and then reattached so that the meridian is identified with the cross-section given by the framing.  Associated to this process is a certain $4$-manifold:  attach $4$-dimensional $2$-handles to $M \times I$ along $L \times \{ 1 \}$, using the given framing of the link components.  The result is a $4$-dimensional cobordism, called the {\em trace} of the surgery, between $M$ and the $3$-manifold $M'$ obtained by surgery on $L$. The collection of belt spheres of the $2$-handles constitutes a link $L' \subset M'$ called the dual link; the trace of the surgery on $L \subset M$ can also be viewed as the trace of a surgery on $L' \subset M'$.

The $4$-manifold trace of the surgery on $L$ is unchanged if one $2$-handle is slid over another $2$-handle.  Such a handle slide is one of several moves allowed in the Kirby calculus \cite{Ki1}.  When the $2$-handle corresponding to the framed component $U$ of $L$ is slid over the framed component $V$ of $L$ the effect on the link is to replace $U$ by the band sum $\overline{U}$ of $U$ with a certain copy of $V$, namely the copy given by the preferred cross-section realizing the framing of $V$. If $M = S^3$ and the framings of $U$ and $V$ are given by the integers $m$ and $n$, respectively, then the integer for $\overline{U}$ will be $m+n\pm 2\cdot link(U,V)$ where $link$ denotes the linking number and the sign is $+$ precisely when the orientations of $U$ and the copy of $V$ fit together to orient $\overline{U}$. (This requires us to orient $U$ and $V$, but the answer is easily seen to be independent of the choice of orientations.) The terms {\em handle addition} and {\em subtraction} are used to distinguish the two cases of sign ($+$ and $-$, respectively) in the formula.

If the handle slide is viewed dually, via the description of the $4$-manifold as the trace of surgery on $L' \subset M'$, the picture is counterintuitive:   Let $U' \subset M'$ and $V' \subset M'$ be the dual knots to $U$ and $V$.  Then the link in $M'$ that is dual to $\overline{U} \cup V$ is $U' \cup \overline{V'}$, where $\overline{V'}$ is obtained by a handle-slide of $V'$ over $U'$.

Let $\#_{n} (S^{1} \times S^{2})$ denote the connected sum of $n$ copies of $S^1 \times S^2$.  The Generalized Property R conjecture (see \cite[Problem 1.82]{Ki2}) says this:

\begin{conj}[Generalized Property R] \label{conj:genR} Suppose $L$ is an integrally framed link of $n \geq 1$ components in $S^3$, and surgery on $L$ via the specified framing yields $\#_{n} (S^{1} \times S^{2})$.  Then there is a sequence of handle slides on $L$ that converts $L$ into a $0$-framed unlink.
\end{conj}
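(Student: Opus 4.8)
The natural first move is to extract the homological consequences of the hypothesis. Capping the $S^3$ boundary of the surgery trace with a $4$-ball produces a simply connected $4$-manifold $X = B^4 \cup (n \text{ two-handles})$ with $\partial X = \#_n(S^1\times S^2)$; comparing $H_2$ and $H_1$ of $X$ and of $\partial X$ via the long exact sequence of the pair together with Lefschetz duality forces the linking matrix of $L$ (with the framings on the diagonal) to be the zero matrix. Hence every framing is $0$ and all pairwise linking numbers vanish, so $L$ is already a $0$-framed link that is homologically an unlink; the whole content of the conjecture is to upgrade ``homologically'' to ``genuinely, after handle slides.''

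For that upgrade I would attempt induction on $n$. The case $n=1$ is exactly Gabai's Property R, Theorem~\ref{thm:PropR}, and there are no slides to perform. For the inductive step, pass to the dual link $L' \subset \#_n(S^1\times S^2)$, for which the same trace is realized as surgery back to $S^3$; recall from the discussion above that handle slides on $L$ correspond dually to handle slides on $L'$, with the roles of the slid and the fixed component interchanged. The goal would be to produce a sequence of such slides after which some component $K'$ of $L'$ is isotopic to an $S^1\times\{\mathrm{pt}\}$ factor of $\#_n(S^1\times S^2)$, unlinked from the rest of $L'$: surgering the remaining $n-1$ components then yields $S^1\times S^2$ with $K'$ sitting as $S^1\times\{\mathrm{pt}\}$, one invokes Property R (or the inductive hypothesis) there, and translates the whole reduction back to $L$. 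In $4$-manifold language one is trying to find handle slides exhibiting $X$ as the standard $\natural_n(S^2\times B^2)$ \emph{through the given handle structure}, i.e. making one $2$-handle geometrically cancel a dual $1$-handle.

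The main obstacle is precisely that last step. Unlike the $n=1$ case there is no homological obstruction preventing genuinely nontrivial links from surgering to $\#_n(S^1\times S^2)$, so the required slides have to be produced by hand, and neither the sutured-manifold and taut-foliation machinery behind Property R nor the formal Kirby calculus supplies any mechanism for finding them; phrased via $X$, one is asking an Andrews--Curtis-flavored question about handle structures on the standard $\natural_n(S^2\times B^2)$, exactly the setting in which exotic or simply intractable behaviour is expected. Indeed, the construction of this paper based on the square knot strongly suggests that the conjecture is \emph{false} already at $n=2$. So rather than a proof I would anticipate either a theorem under restrictive hypotheses (the link fibered, or one component unknotted with a genus bound) or---as pursued here---a sharp description of the candidate counterexample together with the weaker ``add and later remove canceling Hopf pairs'' variant, which does survive.
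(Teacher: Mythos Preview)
The statement is a \emph{conjecture}, not a theorem, and the paper offers no proof of it; on the contrary, the paper's whole thrust is that Conjecture~\ref{conj:genR} is probably false already at $n=2$, with the square-knot links $Q\cup V_n$ as candidate counterexamples tied to the Andrews--Curtis problem. Your write-up correctly recognizes this: you extract the easy homological consequence (the linking matrix vanishes, which the paper also notes immediately after stating the conjecture), sketch why an inductive reduction to Gabai's Property~R has no teeth, and then concede that a disproof rather than a proof is the expected outcome. That is the right reading of the situation, and your final paragraph accurately summarizes both the obstruction (Andrews--Curtis--type rigidity of handle structures) and the paper's response (characterize the counterexample candidates and pass to the weakened conjecture with Hopf pairs). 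There is nothing to correct here beyond noting that what you have written is commentary on why no proof should be expected, not a proof attempt in the usual sense.
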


The hypothesis immediately implies that all framings and linking numbers are zero.

\bigskip

There is a way of stating the conjecture so that the focus is on knots, rather than links:

\begin{defin} A knot $K \subset S^3$ has {\bf Property nR} if it does not appear among the components of any $n$-component counterexamples to the Generalized Property R conjecture.  That is, whenever $K$ is a component of an n-component link $L \subset S^3$ and some integral surgery on $L$ produces  $\#_{n} (S^{1} \times S^{2})$, then there is a sequence of handle slides on $L$ that converts $L$ into a $0$-framed unlink
\end{defin}

The Generalized Property R conjecture is that the following is true for all $n \geq 1$:

\begin{conj}[Property nR Conjecture] All knots have Property nR.
\end{conj}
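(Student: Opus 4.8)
The plan is to prove the universally quantified statement by induction on the number of components $n$; since ``all knots have Property $nR$'' is, as noted above, exactly the Generalized Property R conjecture for $n$-component links, the inductive hypothesis is that every $(n-1)$-component framed link whose surgery yields $\#_{n-1}(S^1\times S^2)$ can be turned into a $0$-framed unlink by handle slides. The base case $n=1$ is immediate from Theorem \ref{thm:PropR}: if $K$ is a single component whose surgery yields $S^1\times S^2$, then the homological constraint recorded above forces the framing to be $0$, Property R identifies $K$ as the unknot, and $L$ is already the $0$-framed unknot.

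For the inductive step I would start from an $n$-component link $L$ whose surgery gives $\#_n(S^1\times S^2)$ and record, again from the observation above, that the entire linking matrix of $L$, framings included, vanishes. The goal is then to use handle slides to bring $L$ into the form $U\sqcup L''$, where $U$ is a $0$-framed unknot split from the remaining $(n-1)$-component sublink $L''$. Once this splitting is achieved, surgery on $U$ contributes a single $S^1\times S^2$ summand, so by uniqueness of prime decomposition $L''$ must surger to $\#_{n-1}(S^1\times S^2)$; the inductive hypothesis then reduces $L''$ to a $0$-framed unlink, and together with $U$ this completes the reduction of $L$.

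Everything therefore rests on the isolation step: producing, by handle slides alone, a split $0$-framed unknot. Here I would pass to the dual link $L'\subset\#_n(S^1\times S^2)$, whose surgery recovers $S^3$ and for which the target $0$-framed unlink corresponds to the standard family of $n$ disjoint fibers $S^1\times\{pt\}$, one in each summand, freely generating $\pi_1$. The task then becomes carrying $L'$ to these standard fibers by the dual handle slides described in the introduction. Because the distinguished component $K$ is fibered, I would try to feed its fiber surface into a sutured-manifold hierarchy in the spirit of Gabai's proof of the $n=1$ case, seeking a decomposing surface compatible with the fibration that forces one component of $L'$ to be isotopic, after slides, to a single fiber, and hence to split off one $S^1\times S^2$ summand.

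The isolation step is exactly where the full difficulty of the conjecture is concentrated, and it is the step I expect to obstruct a general proof. Handle slides realize only Nielsen- and Andrews--Curtis-type transformations of the balanced group presentation that the surgery attaches to $L$, so reducing $L$ to the $0$-framed unlink is at least as hard as trivializing that presentation by Andrews--Curtis moves, and no mechanism is known to guarantee that such a reduction exists in general. Indeed, the body of this paper argues that for the square knot the required reduction very probably does \emph{not} exist. A successful proof would therefore have to either restrict attention to a subclass of fibered knots for which the sutured-manifold argument genuinely closes up, or relax the strict handle-slide requirement to the weaker, Hopf-stabilized notion discussed in the introduction.
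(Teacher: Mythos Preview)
The statement you are attempting to prove is a \emph{conjecture}, and the paper offers no proof of it; on the contrary, the entire thrust of Sections~\ref{sect:squareknot} and~\ref{sect:nonstandard} is to argue that the conjecture is probably \emph{false} already for $n=2$, with the square knot as the likely counterexample. So there is no ``paper's own proof'' to compare against, and your proposal cannot succeed as written.

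You recognize this yourself in your final paragraph: the ``isolation step''---producing by handle slides a split $0$-framed unknotted component---is where the argument collapses. This is not a technical lacuna that more work would close; it is the whole conjecture. Your reduction by induction is correct in the trivial sense that if one could always split off a $0$-framed unknot by slides, the conjecture would follow, but nothing in the sutured-manifold machinery you invoke delivers that conclusion for $n\ge 2$. Gabai's argument for $n=1$ works because there is a single knot and a single surgery, so the taut foliation / Thurston-norm information pins down the knot exterior directly; with multiple components the hierarchy gives no control over how the components interact under slides. Your Andrews--Curtis observation is exactly the obstruction the paper exploits: the link $L_{n,k}$ determines a balanced presentation that is believed not to be Andrews--Curtis trivializable, so no sequence of handle slides can reduce it to the unlink. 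In short, what you have written is an honest outline of why the conjecture is hard, not a proof of it, and the paper's position is that the conjecture is likely false.
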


From this perspective, Gabai's theorem \ref{thm:PropR} is that all knots have Property 1R.  It is known (\cite[Prop. 2.3]{ST}) that any counterexample to any Property nR Conjecture must be slice (at least topologically).  It is further shown in \cite{ST} that the unknot has Property 2R and, if there is any fibered counterexample to Property 2R, there is a non-fibered counterexample of lower genus.  These considerations suggest that, if one is searching for a potential counterexample to Property nR, the simplest candidate could be the square knot as a potential counterexample to Property~2R, for the square knot is slice, fibered and of genus $2$.

%Unless explicitly stated otherwise, all manifolds throughout the paper will be compact and orientable.

\section{The square knot in links that surger to $\#_{2} (S^{1} \times S^{2})$} \label{sect:squareknot}

The square knot $Q$ is the connected sum of two trefoil knots, $K_R$ and $K_L$, respectively the right-hand trefoil knot and the left-hand trefoil knot.   There are many $2$-component links containing $Q$ so that surgery on the link gives $\#_{2} (S^{1} \times S^{2})$.  Figure \ref{fig:squareknot} shows (by sliding $Q$ over the unknot) that the other component could be the unknot; Figure \ref{fig:squareknot2b} shows (by instead sliding the unknot over $Q$) that the second component could be quite complicated.  In this section we show that, up to handle-slides of $V$ over $Q$, there is a straight-forward description of all two component links $Q \cup V$, so that surgery on $Q \cup V$ gives $\#_{2} (S^{1} \times S^{2})$.

The critical ingredient in the characterization of $V$ is the collection of properties listed below, which apply because $Q$ is a genus two fibered knot.   Here $h$ denotes the monodromy homeomorphism on the fiber $F_-$ of a genus two fibered knot $U$ and $F$ is the closed genus two surface obtained by capping off $\bdd F_-$ with a disk.  Equivalently, $F$ is the fiber of the {\em closed} genus two fibered $3$-manifold obtained by $0$-framed surgery on $U$.

\begin{prop}  \label{prop:genustwo} Suppose $U \subset S^3$ is a genus two fibered knot and $V \subset S^3$ is a disjoint knot.  Then $0$-framed surgery on $U \cup V$ gives $\#_{2} (S^{1} \times S^{2})$  if and only if  after possible handle-slides of $V$ over $U$,
\begin{enumerate}
\item $V$ lies in a fiber of $U$;
\item $h(V)$ can be isotoped to be disjoint from $V$ in $F$ (though not necessarily in $F_-$);
\item $h(V)$ is not isotopic to $V$ in $F$; and
\item the framing of $V$ given by $F$ is the $0$-framing of $V$ in $S^3$.
\end{enumerate}.
\end{prop}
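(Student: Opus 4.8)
The plan is to reduce both implications to a single question: when does the $S^3$-$0$-framed surgery on a knot lying (up to handle slides) in a fiber of the closed genus-two fibered manifold $M_U$ obtained by $0$-surgery on $U$ yield $\#_2(S^1\times S^2)$? I will freely use that handle slides of $V$ over $U$ leave the $4$-dimensional trace, hence the surgered $3$-manifold, unchanged, and that the hypothesis forces all framings and linking numbers to vanish. So I first perform $0$-surgery on $U$ alone, obtaining the closed fibered manifold $M_U$ with fiber the closed genus-two surface $F$ and monodromy the extension of $h$ over the capping disk (still written $h$); $M_U$ is irreducible, being fibered with fiber of positive genus. Handle slides of $V$ over $U$ become isotopies of $V$ in $M_U$ that push $V$ across the product structure of $\bdd N(U)$, i.e. drag $V$ around the dual knot $U^*$. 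The whole statement is now a statement about surgery on the single knot $V\subset M_U$.

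For the ``if'' direction I may (handle slides not changing the answer) assume $V$ satisfies (1)--(4) literally, so $V$ lies in a fiber $F_0$. First, (1)--(3) force $V$ to be non-separating in $F$: if $V$ were separating it would bound a one-holed torus, the disjoint curve $h(V)$ would lie in that torus or in the complementary one, and since $h$ preserves the separating/non-separating dichotomy a short argument on essential curves in a one-holed torus shows $h(V)$ would have to be $\bdd$-parallel, i.e. isotopic to $V$, contradicting (3). Now the suspension flow of the fibration carries $V\subset F_0$ to $h(V)\subset F_0$ and sweeps out an annulus $A\subset M_U$ with $\bdd A = V\sqcup h(V)$; after the isotopy furnished by (2), $A$ is embedded and meets $F_0$ only in $\bdd A$. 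By (4) the surgery performed on $V$ is exactly the $F$-framed surgery, so afterwards $V$ bounds the meridian disk $D$ of the new solid torus and $D\cup A$ is an embedded disk in the surgered manifold $M'$ with boundary $h(V)$. This $h(V)$ lies on the genus-one surface $\widehat F_0$ obtained by surgering $F_0$ along $V$, and by (3) it is essential there; compressing $\widehat F_0$ along $D\cup A$ yields a $2$-sphere $S$ with $[S]=[\widehat F_0]$, and since a loop dual to the fiber $F_0$ meets $\widehat F_0$, hence $S$, exactly once, $S$ is non-separating. Thus $M'$ is reducible, so $M'\cong(S^1\times S^2)\# M''$ with $H_1(M'')=\zed$. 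To upgrade this to $M'\cong\#_2(S^1\times S^2)$ I will check that $\pi_1(M')$ is free: then, as $H_1(M')=\zed^2$, it is free of rank $2$, and a closed orientable $3$-manifold with free $\pi_1$ of rank $n$ is $\#_n(S^1\times S^2)$ (prime decomposition, together with the fact that a prime orientable $3$-manifold with infinite cyclic $\pi_1$ is $S^1\times S^2$ and one with trivial $\pi_1$ is $S^3$). The freeness is a direct computation from the presentation $\pi_1(M_U)=\pi_1(F)\rtimes_{h_*}\zed$, taking $[V]$ as one of the standard generators of $\pi_1(F)$ and using (2)--(4).

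For the ``only if'' direction — the substantive one — assume $0$-surgery on $U\cup V$ gives $\#_2(S^1\times S^2)$, so surgery on $V$ turns the irreducible $M_U$ into a reducible manifold. The key step is to get $V$ into a fiber: take a reducing $2$-sphere in $\#_2(S^1\times S^2)$ and study its intersections with the surgery solid torus dual to $V$ (equivalently, an essential planar surface meeting $V$ in $M_U$); a minimal-position/thin-position analysis together with sutured-manifold theory in the spirit of \cite{Ga} and \cite{ST} shows that, after handle slides of $V$ over $U$ exploiting the product structure on $\bdd N(U)$, $V$ is isotopic into a fiber $F$. This is essentially the relevant lemma of \cite{ST} specialized to two components and genus two, and gives (1). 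Condition (4) is then forced: if the $F$-framing of $V$ differed from the $0$-framing, the $0$-framed surgery on $V$ would instead change the monodromy of $M_U$ by Dehn twists along $V$, or be another non-reducing surgery, producing in every case an irreducible manifold (a surface bundle, or $M_U$ itself), never $\#_2(S^1\times S^2)$. Granting (1) and (4), conditions (2) and (3) follow by running the ``if'' analysis backwards: if $h(V)$ cannot be isotoped off $V$, then (again by a sutured-manifold argument) $F$-framed surgery on $V$ produces no reducing sphere at all and the relevant surfaces stay incompressible; and if $h(V)\simeq V$, then $V$ is monodromy-invariant and $F$-framed surgery on $V$ yields an irreducible surface bundle with non-free $\pi_1$; neither outcome is $\#_2(S^1\times S^2)$. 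As a consistency check, (1)--(3) force $V$ to be non-separating, which is precisely what is needed for $H_1(M')=\zed^2$.

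The principal obstacle is the step of isotoping $V$ into a fiber of $M_U$, establishing (1) in the ``only if'' direction. The remainder is essentially surface topology organized around one dichotomy — a surface-framed surgery on a curve $V$ in a fiber either modifies the monodromy, giving an irreducible fibered manifold, or, exactly when $V$ and $h(V)$ are disjoint and non-isotopic, creates a non-separating reducing sphere — plus a routine fundamental-group computation showing that in the latter case the result is $\#_2(S^1\times S^2)$. But forcing $V$ onto a fiber requires genuine $3$-manifold machinery (sutured manifolds, thin position) and careful control of the handle slides over $U$, and this is the ingredient I would import from \cite{ST}.
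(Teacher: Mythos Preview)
The paper's entire proof of this proposition is the one-line citation ``See \cite[Corollary~5.4]{ST}''; there is no argument in the present paper to compare against. Your proposal is thus a reconstruction of what lies behind that citation. You correctly identify the architecture --- pass to the closed fibered manifold $M_U$, analyze when surface-framed surgery on a curve in the fiber yields $\#_2(S^1\times S^2)$ --- and you are explicit that the decisive step in the ``only if'' direction (isotoping $V$ into a fiber after handle slides over $U$) requires the sutured-manifold and thin-position machinery of \cite{ST}. Since that is exactly what the paper invokes, your approach and the paper's coincide at the point that matters.

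That said, there is a genuine soft spot in your ``if'' direction. The construction of a non-separating sphere from the suspension annulus $A$ and the meridian disk is fine, and gives $M'\cong N\#(S^1\times S^2)$ with $H_1(N)\cong\zed$. But your claim that $\pi_1(M')$ is free ``by a direct computation from $\pi_1(M_U)=\pi_1(F)\rtimes_{h_*}\zed$'' is not actually direct: $\pi_1(M')$ is a quotient of $\pi_1(M_U\setminus N(V))$, not of $\pi_1(M_U)$, and conditions (2)--(4) alone do not obviously collapse that to a free group. A cleaner route is geometric: cut $M'$ along the torus $\widehat F_0$ to obtain a cobordism between two tori built from $F\times[0,1]$ by attaching $2$-handles along $V\times\{0\}$ and $h^{-1}(V)\times\{1\}$; since $V$ and $h^{-1}(V)$ are disjoint, non-isotopic and non-separating in the genus-two surface $F$, this cobordism is the interior connected sum of two solid tori, and regluing the torus boundaries then exhibits $M'$ as a lens space summed with $S^1\times S^2$, with the homology forcing the lens space to be $S^1\times S^2$. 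Your sketches for (2)--(4) in the ``only if'' direction are likewise plausible but terse (e.g., that surface-framed surgery on a monodromy-invariant curve yields a genuine surface bundle needs a line of justification). None of these gaps is fatal, and the proposal is a fair expansion of what the paper leaves to \cite{ST}.
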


\begin{proof} See   \cite[Corollary 5.4]{ST} \end{proof}

 \begin{figure}[ht!]
 \labellist
\small\hair 2pt
\pinlabel $0$ at 72 89
\pinlabel $0$ at 137 120
\pinlabel $0$ at 281 18
\pinlabel $0$ at 338 120
\pinlabel $0$ at 432 18
\pinlabel $0$ at 432 120
\pinlabel {band sum here} at 108 -11

 \endlabellist
    \centering
    \includegraphics[scale=0.7]{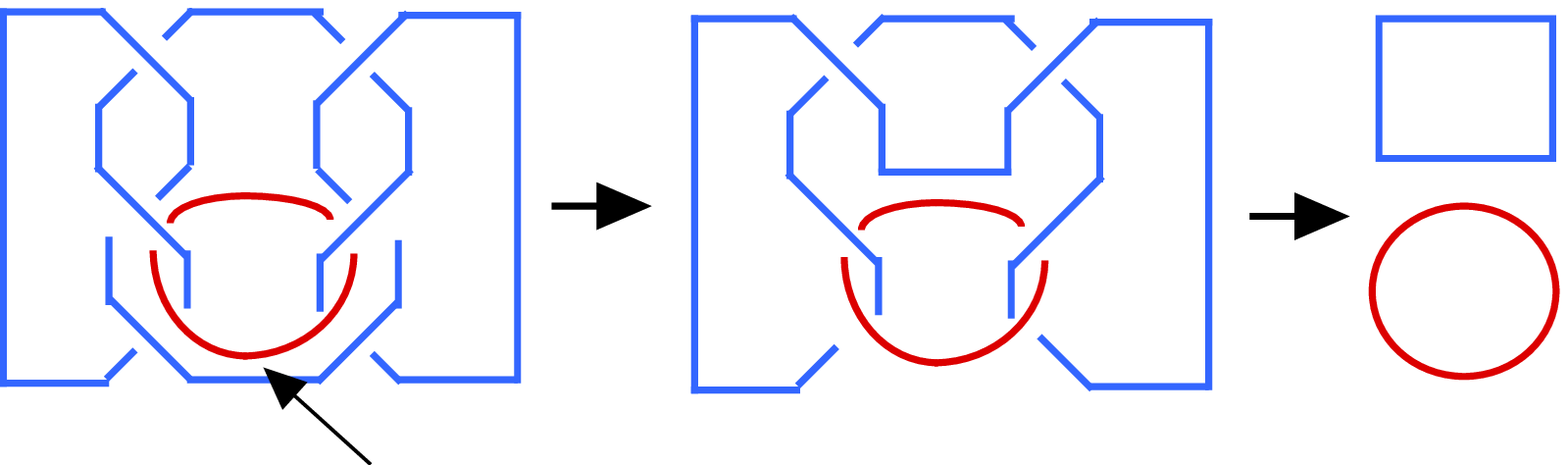}
    \caption{} \label{fig:squareknot}
    \end{figure}

 \begin{figure}[ht!]
    \centering
    \includegraphics[scale=0.7]{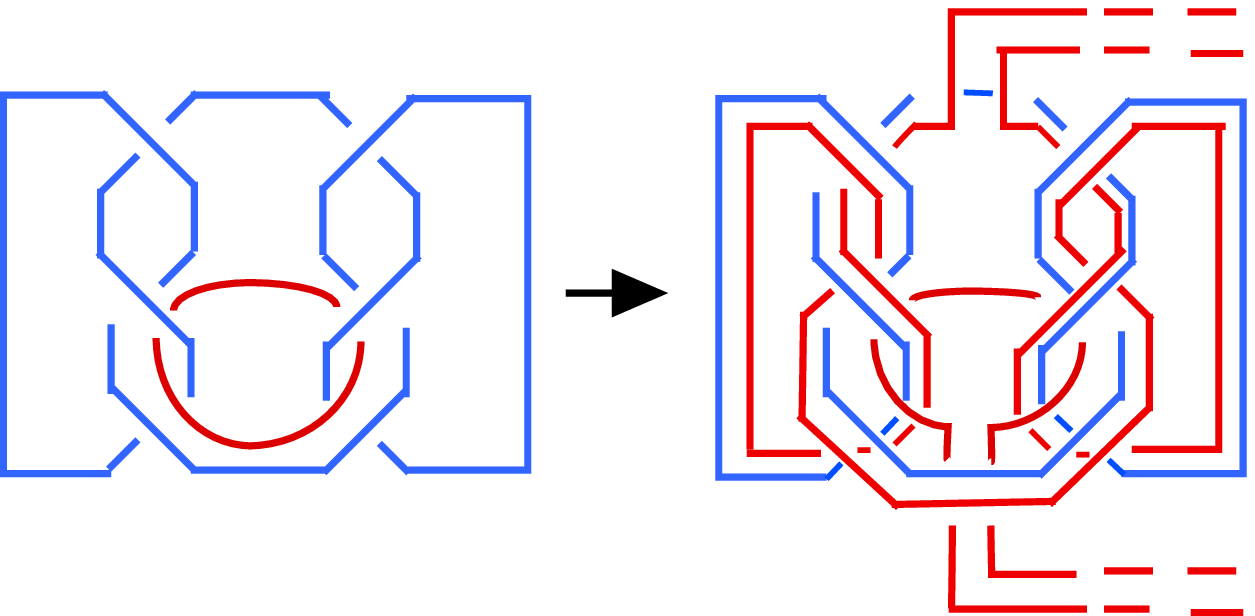}
    \caption{} \label{fig:squareknot2b}
    \end{figure}

Let $M$ be the $3$-manifold obtained by $0$-framed surgery on the square knot $Q$, so $M$ fibers over the circle with fiber the genus $2$ surface $F$.  There is a simple picture of the monodromy $h: F \to F$ of the bundle $M$, obtained from a similar picture of the monodromy on the fiber of a trefoil knot, essentially by doubling it \cite[Section 10.I]{Ro}:

 \begin{figure}[ht!]
  \labellist
\small\hair 2pt
\pinlabel $F$ at 210 700
\pinlabel $P$ at 410 670
\pinlabel \color{red}{$\sss$} at 266 607
\pinlabel \color{ForestGreen}{$\rho$} at 195 770
\pinlabel \color{Brown}$\gamma/\rho$ at 360 660
\pinlabel \color{black}{$/\rho$} at 298 612

 \endlabellist
    \centering
    \includegraphics[scale=0.7]{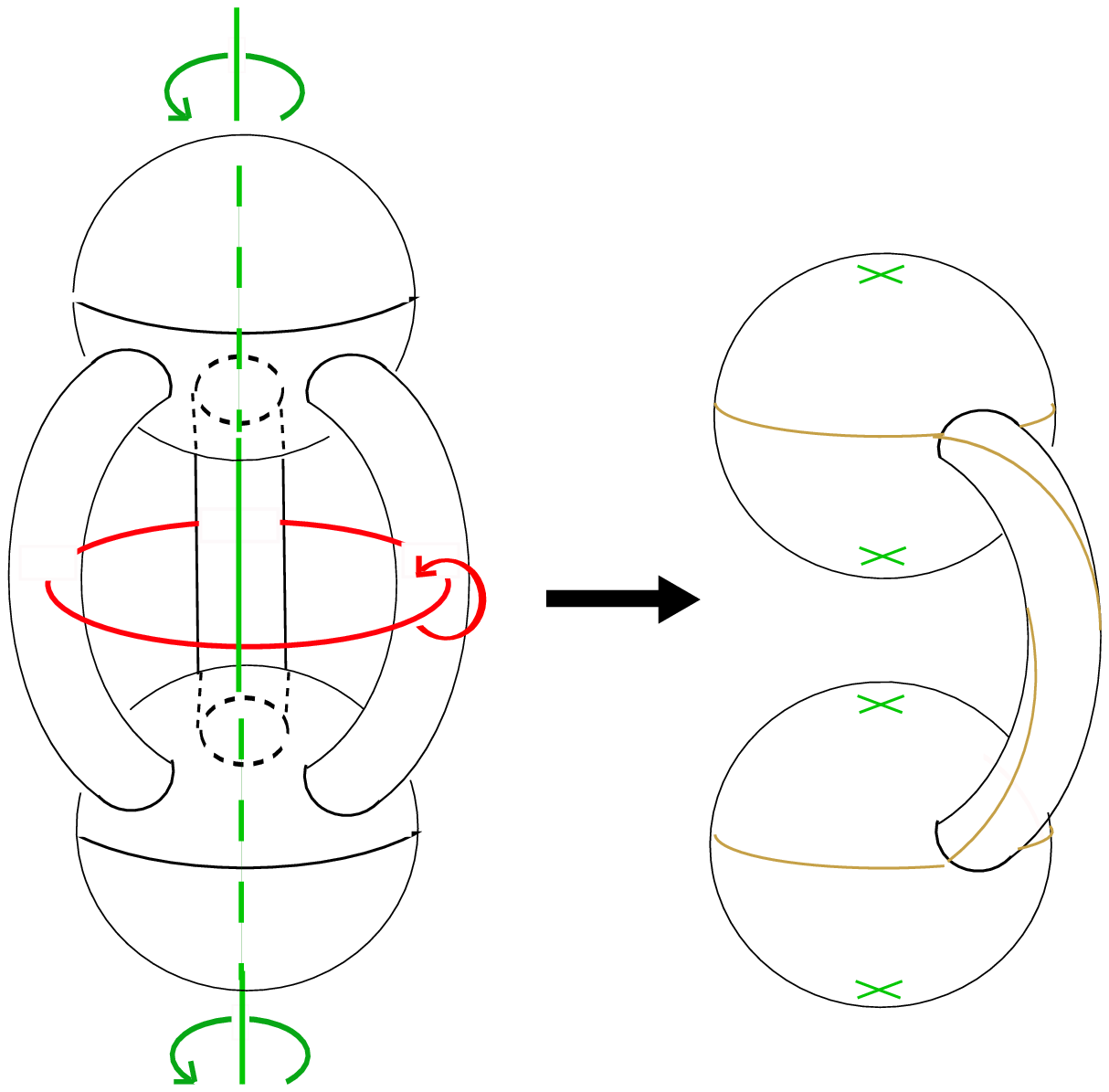}
    \caption{} \label{fig:rhosigma}
    \end{figure}

Regard $F$ as obtained from two spheres by attaching $3$ tubes between them.  See Figure \ref{fig:rhosigma}.  There is an obvious period $3$ symmetry $\rho: F \to F$ gotten by rotating $\frac{2\pi}{3}$ around an axis intersecting each sphere in two points, and a period $2$ symmetry (the hyperelliptic involution) $\sss: F \to F$ obtained by rotating around a circular axis that intersects each tube in two points.  Then $h = \rho \circ \sss = \sss \circ \rho$ is an automorphism of $F$ of order $2 \times 3 = 6$.

The quotient of $F$ under the action of $\rho$ is a sphere with $4$ branch points, each of branching index $3$.  Let $P$ be the $4$-punctured sphere obtained by removing the branch points.   A simple closed curve in $P$ is {\em essential} if it doesn't bound a disk and is not $\bdd$-parallel.  Put another way, a simple closed curve in $P$ is essential if and only if it divides $P$ into two twice-punctured disks.
It is easy to see (viz. \cite[Section 6]{ST}) that there is a separating simple closed curve $\gamma \subset F$ that is invariant under $\sss$ and $\rho$, and hence under $h$, that separates $F$ into two punctured tori $F_R$ and $F_L$; the restriction of $h$ to $F_R$ or $F_L$ is the monodromy of the trefoil knot.   The quotient of $\gamma$ under $\rho$ is shown as the brown curve in Figure \ref{fig:rhosigma} .

An important property of the hyperelliptic involution $\sss: F \to F$ is that it fixes the isotopy class of each curve.  That is, for any simple closed curve $c \subset F$, $\sss(c)$ is isotopic to $c$.  The isotopy preserves orientation on $c$ if and only if $c$ is separating.

This immediately gives
\begin{cor} For any simple closed curve $c \subset F$, $h(c)$ is isotopic in $F$ to a curve disjoint from $c$ if and only if $\rho(c)$ is isotopic in $F$ to a curve disjoint from $c$.
\end{cor}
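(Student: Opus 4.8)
The corollary ought to be a formal consequence of the property of the hyperelliptic involution recalled just above it, so the proof plan is short. The key observation is that $h$ and $\rho$ differ only by $\sss$, and $\sss$ is invisible at the level of isotopy classes of simple closed curves.

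First I would use the factorization $h = \rho \circ \sss$ to write $h(c) = \rho(\sss(c))$. The property of $\sss$ recalled above says that $\sss(c)$ is isotopic to $c$ in $F$; applying the homeomorphism $\rho$, which carries isotopic curves to isotopic curves, then shows that $h(c) = \rho(\sss(c))$ is isotopic in $F$ to $\rho(c)$. Thus $h(c)$ and $\rho(c)$ are isotopic simple closed curves in $F$ (here $\rho(c)$ is indeed a simple closed curve, being the image of one under a homeomorphism).

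Next I would observe that, for a fixed simple closed curve $c$, whether another simple closed curve $d$ can be isotoped off $c$ depends only on the isotopy class of $d$: the geometric intersection number $i(d,c)$ is an isotopy invariant of $d$, and it vanishes exactly when $d$ can be isotoped to be disjoint from $c$. Feeding the isotopy $h(c) \simeq \rho(c)$ into this observation immediately yields the claim, namely that $h(c)$ is isotopic to a curve disjoint from $c$ if and only if $\rho(c)$ is. No essentiality hypothesis on $c$ enters, so the equivalence holds for every simple closed curve $c \subset F$.

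There is no real obstacle here; the corollary is purely a bookkeeping consequence of the behavior of $\sss$ together with the isotopy-invariance of geometric intersection number. The one point worth flagging is that we only use how $\sss$ acts on the \emph{unoriented} isotopy class of a curve, so the parenthetical remark that $\sss$ reverses orientation on non-separating curves is irrelevant to the argument, disjointness being insensitive to orientation.
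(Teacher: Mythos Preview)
Your argument is correct. Both proofs rest on the same key fact---that $\sss$ fixes isotopy classes of simple closed curves---but you deploy it more directly than the paper does. You apply $\rho$ to the isotopy $\sss(c)\simeq c$ to conclude immediately that $h(c)=\rho(\sss(c))$ is isotopic to $\rho(c)$, so the two curves have identical disjointness behavior relative to $c$. The paper instead uses $h^3=\sss$ to replace $c$ by $h^3(c)$ in the disjointness condition, then applies the homeomorphism $h^{-3}$ to the pair $(h(c),h^3(c))$, arriving at $(h^{-2}(c),c)=(\rho(c),c)$. Your route is shorter and yields the slightly stronger intermediate statement that $h(c)$ and $\rho(c)$ are actually isotopic in $F$, not merely that they have the same geometric intersection with $c$; the paper's route, by contrast, makes more visible the group-theoretic identities $h^3=\sss$ and $h^{-2}=\rho$ inside the order-$6$ cyclic group.
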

\begin{proof}  Since $h^3(c) = \sss(c)$ is isotopic to $c$ in $F$, $h(c)$ is isotopic to a curve disjoint from $c$ if and only if $h(c)$ is isotopic to a curve disjoint from $h^3(c)$.  Applying $h^{-3}: F \to F$ shows that $h(c)$ is isotopic to a curve disjoint from $h^3(c)$ if and only if $h^{-2}(c) = \rho(c)$ is isotopic to a curve disjoint from $c$.
\end{proof}

  \begin{lemma} \label{lemma:overline}For the $3$-fold branched covering $/\rho: F \to S^2 \supset P$ described above: \begin{enumerate}
 \item An essential simple closed curve $c \subset F$ has the property that $\rho(c)$ can be isotoped to be disjoint from $c$ if and only if $c$ is isotopic to a lift of an essential simple closed curve $\overline{c}$ in $P$.

 \item  A lift $c$ of an essential simple closed curve $\overline{c}$ in $P$ is separating in $F$ if and only if $\overline{c}$ separates the pair of branch points coming from $F_L$ from the branch points coming from $F_R$.

 \item  A single lift $c$ of an essential simple closed curve $\overline{c}$ in $P$ is non-separating in $F$ if and only if $c$ projects homeomorphically to $\overline{c}$.
 \end{enumerate}
 \end{lemma}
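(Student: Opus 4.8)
The whole lemma rests on first pinning down the monodromy of the branched covering; write $\pi := {/\rrr}\colon F\to S^2$. Since $\pi$ is the regular $\zed/3$-cover dual to rotation by $\rrr$, its monodromy is a surjection $m\colon \pi_1(P)\to\zed/3$ sending each of the four loops encircling a branch point to a generator of $\zed/3$ (the branching index is $3$); as the product of these four loops is trivial and $\pi_1(P)$ is free of rank three, a short arithmetic check forces exactly two of the four branch points to carry monodromy $+1$ and the other two to carry $-1$, after choosing a sign. Furthermore the loop $\gamma/\rrr\subset P$ encircles the two branch points coming from $F_R$ on one side (and the two coming from $F_L$ on the other), and its monodromy is nontrivial because its full preimage is the single $\rrr$-invariant circle $\gamma$ (an order-$3$ homeomorphism of a circle is free, so $\gamma\to\gamma/\rrr$ has degree $3$); hence the two branch points coming from $F_R$ carry a common monodromy, say $+1$, and the two coming from $F_L$ carry $-1$. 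Therefore, for an essential $\overline c\subset P$ --- which necessarily splits the four punctures $2+2$ --- there are exactly two cases: if $\overline c$ is \emph{mixed}, with one $F_R$-point and one $F_L$-point on each side, the monodromy around $\overline c$ is trivial and $\pi^{-1}(\overline c)$ is three disjoint circles each mapped homeomorphically to $\overline c$; if $\overline c$ is \emph{unmixed}, with the two $F_R$-points on one side and the two $F_L$-points on the other, the monodromy around $\overline c$ is a $3$-cycle and $\pi^{-1}(\overline c)$ is a single circle mapped $3$-to-$1$ onto $\overline c$. This dichotomy is the engine for the rest.

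Parts (2) and (3) are then an Euler characteristic count. Cut $S^2$ along an essential $\overline c$ into two twice-punctured disks. If $\overline c$ is unmixed, call $D_R$ the piece containing the two $F_R$-branch points; then each of $\pi^{-1}(D_R)$ and $\pi^{-1}(D_L)$ is connected (the monodromy acts transitively there), is a $3$-fold branched cover of a disk over two order-$3$ points, and has a single boundary circle, namely the lift $c$. Riemann--Hurwitz gives $\chi=-1$, so each piece is a once-punctured torus, and $F=\pi^{-1}(D_R)\cup_c\pi^{-1}(D_L)$ exhibits $c$ as separating (these pieces are of course copies of $F_R$ and $F_L$). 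If $\overline c$ is mixed, each piece instead is a $3$-fold branched cover of a disk over an order-$3$ point of monodromy $+1$ and one of monodromy $-1$, is connected, has three boundary circles (the three lifts) and $\chi=-1$, hence is a pair of pants; then $F$ is two pairs of pants glued along the three lifts, and deleting any one lift leaves a connected surface. This is (2): a lift of $\overline c$ is separating precisely when $\overline c$ is unmixed, i.e.\ precisely when $\overline c$ separates the $F_L$-branch points from the $F_R$-branch points. Part (3) is then immediate from (2) together with the dichotomy above: a lift $c$ projects homeomorphically to $\overline c$ exactly when $\overline c$ is mixed, which is exactly when $c$ is nonseparating.

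The easy direction of (1) is similar bookkeeping: if $c$ is isotopic to a lift of an essential $\overline c$, replace $c$ by that lift; when $\overline c$ is mixed, $\rrr(c)$ is one of the other two lifts and is literally disjoint from $c$, while when $\overline c$ is unmixed, $\rrr(c)=c$, which is separating by (2) and hence isotopic to a parallel copy of itself disjoint from $c$. The real content is the converse. Suppose $c\subset F$ is essential and $\rrr(c)$ is isotopic to a curve disjoint from $c$, i.e.\ the geometric intersection number $i(c,\rrr(c))$ vanishes; since $i$ is $\rrr$-invariant, $i(c,\rrr^2(c))=i(\rrr(c),c)=0$ too, so $c,\rrr(c),\rrr^2(c)$ have vanishing pairwise geometric intersection numbers. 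The plan is to realize this $\rrr$-orbit \emph{equivariantly}: give $F$ an $\rrr$-invariant hyperbolic metric, pulled back from a hyperbolic structure on the quotient orbifold $S^2(3,3,3,3)$ (which is hyperbolic, its orbifold Euler characteristic being negative). The geodesic representatives of $c,\rrr(c),\rrr^2(c)$ are then pairwise disjoint, and uniqueness of geodesic representatives makes $\rrr$ permute them, so their union $C$ is an $\rrr$-invariant geodesic multicurve, automatically disjoint from the four fixed points of $\rrr$, consisting either of three circles cyclically permuted by $\rrr$ or --- in the degenerate case $c\simeq\rrr(c)$ --- of a single $\rrr$-invariant circle. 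In either case $\pi(C)=\overline c$ is an embedded closed geodesic in the quotient orbifold, hence essential in $P$ (a closed geodesic is neither null-homotopic nor peripheral to a cone point of finite order), and a degree count gives $\pi^{-1}(\overline c)=C$; so the geodesic representative of $c$ is a component of $\pi^{-1}(\overline c)$, that is, a lift of $\overline c$. The one genuinely delicate step is this equivariant realization --- the group-equivariant upgrade of the elementary fact that curves with zero pairwise intersection can be made simultaneously disjoint --- together with the care needed around the cone points; the invariant-hyperbolic-metric argument dispatches it, but one should check the degenerate case $c\simeq\rrr(c)\simeq\rrr^2(c)$ (orbit collapsing to one $\rrr$-invariant curve that triple-covers $\overline c$) on its own.
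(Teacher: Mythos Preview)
Your argument is correct, and for part~(1) it coincides with the paper's: both put a $\rho$-invariant hyperbolic metric on $F$, pass to geodesic representatives, and use that distinct geodesics with zero intersection number are disjoint, so the $\rho$-orbit of the geodesic $c$ is an invariant multicurve that projects to an embedded essential curve in $P$. For parts~(2) and~(3), however, you take a genuinely different route. The paper first proves the Claim that a lift $c$ is $\rho$-invariant iff $c$ is separating, by combining the algebraic fact that $\pm 1$ are not eigenvalues of $h_*$ on $H_1(F)$ (so no nonseparating curve can be $\rho$-invariant) with the low-genus observation that two disjoint separating curves on a genus-$2$ surface must be parallel; it then reads off the monodromy of $\overline{c}$ by viewing it as the boundary of a neighborhood of an arc between two branch points and comparing ``north/south pole'' orientations. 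You instead pin down the monodromy of the four branch points directly (using that $\gamma$ is a single $\rho$-invariant circle to see that $\gamma/\rho$ has nontrivial monodromy, hence the two $F_R$-points share a sign), and then run Riemann--Hurwitz on the two complementary disks of $\overline{c}$ to identify the preimage pieces as two once-punctured tori (unmixed case, $c$ separating) or two pairs of pants (mixed case, each lift nonseparating). Your approach is more self-contained and would transplant to other cyclic branched covers; the paper's is shorter once one is willing to quote the eigenvalues of the trefoil/square-knot monodromy and the genus-$2$ parallelism fact.
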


 \begin{proof}  Since $\overline{c}$ is embedded, any lift $c$ of $\overline{c}$ will either be carried by $\rho$ to a disjoint curve, or to $c$ itself.  In either case, $\rho(c)$ can be isotoped off of $c$.  On the other hand, suppose $\rho(c)$ can be isotoped off of $c$.  Give $F$ the standard hyperbolic metric, which is invariant under $\rho$, and isotope $c$ to a geodesic in $F$.  Then $\rho(c)$ is also a geodesic and, since they can be isotoped apart, either $\rho(c) = c$ or $c \cap \rho(c) = \emptyset$.  Apply $\rho^{\pm 1}$ to deduce that  $c, \rho(c), \rho^{2}(c)$ either all coincide or are all disjoint.  In either case, $c$ projects to an embedded curve in $P$.  This proves the first part of the Lemma.

To prove the second and third parts, we first establish:

\bigskip

{\bf Claim:}  An essential simple closed curve $\overline{c} \subset P$ lifts to a curve $c$ that is invariant under $\rho$ if and only if $c$ is separating in $F$.

\bigskip

\noindent {\em Proof of Claim.}  Since $\pm 1$ are not eigenvalues of the monodromy matrix for $h_*: \mathbb{Z}^4 \to \mathbb{Z}^4$, no non-separating curve is left invariant by $h$ or $\rho$.  On the other hand, if the essential curve $c$ (and hence also $\rho(c)$) is separating and $\rho(c)$ is disjoint from $c$ then, since $F$ only has genus $2$, $c$ and $\rho(c)$ must be parallel in $F$.  This proves the claim.

\bigskip

The Claim establishes the third part of the Lemma.  For the second part, we restate the Claim: whether $\overline{c}$ lifts to a non-separating or to a separating curve is completely determined by whether $\overline{c}$ is covered in $F$ by three distinct curves or is thrice covered by a single curve.  That, in turn, is determined by whether $\overline{c}$ represents an element of $\pi_1(P)$ that is mapped trivially or non-trivially to $\mathbb{Z}_3$ under the homomorphism $\pi_1(P) \to \mathbb{Z}_3$ that defines the branched covering $F \to S^2 \supset P$.  One way to view such a curve $\overline{c}$ in $P$ is as the boundary of a regular neighborhood of an arc $\underline{c}$ between two branch points of $S^2 \supset P$, namely a pair of branch points (either pair will do) that lie on the same side of $\overline{c}$ in $S^2$.  From that point of view, $\overline{c}$ represents a trivial element of $\mathbb{Z}_3$ if and only if the normal orientations of the branch points at the ends of the arc $\underline{c}$ disagree; that is, one is consistent with a fixed orientation of the rotation axis and one is inconsistent.  Put another way, $\overline{c}$ represents a trivial element if and only if one endpoint of $\underline{c}$ is a north pole in Figure \ref{fig:rhosigma} and the other is a south pole.  The brown curve in the Figure, which lifts to the curve separating $F_R$ from $F_L$, separates both north poles from both south poles.  This establishes the second part of the Lemma.  \end{proof}

\begin{lemma} \label{lemma:framing} If $c \subset F$ projects to a simple closed curve $\overline{c}$ in $P$ then the framing of $c$ given by the Seifert surface $F_- \subset S^3$ of $Q$ is the $0$-framing. \end{lemma}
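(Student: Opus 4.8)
The plan is to identify the $F_-$-framing of $c$ with a value of the Seifert form of $Q$ and then show that value vanishes, using the $\rho$-symmetry.

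First I would record a preliminary observation: for \emph{any} simple closed curve $c$ in a Seifert surface $G \subset S^3$ of a knot, the framing that $G$ induces on $c$ equals $\theta([c],[c])$, where $\theta\colon H_1(G)\times H_1(G)\to\mathbb{Z}$ is the Seifert form. Indeed, if $c'$ is a parallel copy of $c$ in $G$ and $c^+$ is the positive push-off of $c$ off $G$, then the annulus in $G$ between $c$ and $c'$ misses $c^+$, so the framing is $lk(c,c')=lk(c^+,c')=lk(c^+,c)=\theta([c],[c])$. In particular the framing depends only on $[c]\in H_1(G)$, it is automatically $0$ when $[c]=0$, and it is unaffected by any isotopy of $c$ in $G$ (or, after pushing $c$ off the capping disk, by any isotopy of $c$ in $F$). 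Since a separating curve in $G$ is null-homologous in $G$, this already disposes of the lemma whenever $c$ is separating in $F$ — which by Lemma~\ref{lemma:overline} is exactly when $\overline c$ separates — with no use of the hypothesis. So it remains to treat the case in which $c$, and hence (Lemma~\ref{lemma:overline}(3)) its image $\overline c$, is non-separating and $c$ maps homeomorphically onto $\overline c$.

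For that case I would exploit the connected-sum structure $Q=K_R\#K_L$. Writing $F_-=F_R\natural F_L$ with $F_R,F_L$ the fiber surfaces of the right- and left-hand trefoils, the Seifert form splits as $\theta=\theta_R\oplus\theta_L$ on $H_1(F_-)=H_1(F_R)\oplus H_1(F_L)$, the intersection form of $F$ splits as $\omega=\omega_R\oplus\omega_L$, and $\rho$ restricts to the order-three symmetry of each trefoil fiber, so $\rho_*=\rho_{R*}\oplus\rho_{L*}$. The key identity is that on each trefoil summand $\theta_\bullet(x,x)=-\omega_\bullet(x,\rho_{\bullet*}x)$. I would verify this either by a direct $2\times2$ computation with the trefoil Seifert matrix, or more conceptually: a plumbing core $x$ of the trefoil fiber and the other two curves $\rho x,\rho^2 x$ in its $\rho$-orbit all have the same $\theta_\bullet$-self-framing ($-1$ for the right-hand trefoil, $+1$ for the left-hand one), while $x$, $\rho x$, $x+\rho x$ form a basis-plus-sum that pins down a binary quadratic form; comparing the values of $\theta_\bullet(\cdot,\cdot)$ and of $\omega_\bullet(\cdot,\rho_{\bullet*}\cdot)$ on these three classes forces the identity, the sign coming out uniformly for the two summands because passing to the mirror image negates the Seifert form and reverses the sense of the rotation $\rho$ in a compensating way. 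Summing over the two summands gives $\theta([c],[c])=-\omega([c],\rho_*[c])$; that is, the $F_-$-framing of $c$ equals $-\,c\cdot\rho(c)$, the algebraic intersection number of $c$ with $\rho(c)$ in $F$ (equivalently $+\,c\cdot h(c)$, since $h=\rho\sigma$ and $\sigma_*=-\mathrm{id}$).

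Finally, because $c$ projects to a simple closed curve in $P$, Lemma~\ref{lemma:overline}(1) lets me isotope $c$ in $F$ so that $\rho(c)$ is disjoint from $c$ (for instance by realizing $c$ as a geodesic in the $\rho$-invariant hyperbolic metric, exactly as in that proof); then $[c]\cdot\rho_*[c]=0$, so the $F_-$-framing of $c$ is $0$, as claimed. I expect the main obstacle to be nailing down the key identity $\theta_\bullet(x,x)=-\omega_\bullet(x,\rho_{\bullet*}x)$ together with its signs — in particular checking that the two trefoil contributions reinforce rather than cancel — and being careful that the isotopies involved (off the capping disk of $F$, and into disjoint position for $\rho(c)$) do not disturb the framing, which they do not since the framing is an invariant of the class $[c]\in H_1(F)$.
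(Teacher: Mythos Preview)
Your argument is correct, but it proceeds quite differently from the paper's. The paper never touches the Seifert matrix. Instead it exhibits an orientation\-reversing diffeomorphism $\tau$ of $(S^3,F_-)$, namely reflection in the projection plane followed by a $\pi$\-rotation exchanging the two trefoil summands. Because $\tau$ reverses the orientation of $S^3$ one has $lk(\tau(c),\tau(c'))=-lk(c,c')$; because $\tau$ descends to a $\pi$\-rotation of $P$ fixing isotopy classes of simple closed curves, $\tau(c)$ is isotopic in $F$ to $\pm\rho^i(c)$ for some $i$, and $\rho$ (being a power of the monodromy) preserves linking. Hence $lk(c,c')=-lk(c,c')=0$.

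Your route is more algebraic: you reduce to the homological statement $\theta([c],[c])=0$, prove the identity $\theta(x,x)=-\omega(x,\rho_*x)$ summand by summand via the trefoil Seifert matrix, and then invoke Lemma~\ref{lemma:overline}(1) to make $c$ and $\rho(c)$ disjoint. This has the virtue of producing an explicit formula (the framing equals $-[c]\cdot[\rho(c)]$) and of making transparent why the hypothesis that $c$ projects to an embedded curve is exactly what is needed. The cost is the sign bookkeeping you flag: one really must check that the identity holds with the \emph{same} sign on the left\-trefoil summand, and your ``mirror reverses $\theta$ and reverses $\rho$'' heuristic is the point most in need of the direct $2\times2$ verification you mention. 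The paper's symmetry argument sidesteps this entirely, since $\tau$ literally swaps $F_R$ and $F_L$, so no separate analysis of the two summands is required.
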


\begin{proof}  Here is an equivalent conclusion:  If $c'$ is a parallel copy of $c$ in $F_-$, then $link(c, c') = 0$ in $S^3$.  Consider the following automorphism $\tau$ of $S^3$ that preserves the Seifert surface $F_-$ of $Q$, as viewed in the left side of Figure  \ref{fig:tau}:  Reflect through the plane of the knot projection (this reflection makes the right-hand trefoil a left-hand trefoil, and vice versa), then do a $\pi$ rotation around an axis in $S^3$ that passes through the center of the arc $\gamma_- \subset F_-$ that separated $K_R$ from $K_L$.  The automorphism is orientation preserving on the Seifert surface, but orientation reversing on $\gamma$ and on $S^3$.  Because of the last fact, $link(\tau(c), \tau(c')) = -link(c, c')$.

The automorphism $\tau |F_-$ descends to an automorphism $\overline{\tau}$ of $S^2 \supset P$ given by $\pi$-rotation about the blue axis shown in Figure  \ref{fig:tau}.  This rotation exchanges $F_R$ and $F_L$ but, up to isotopy and orientation, leaves invariant the simple closed curves in $P$.  It follows that in $F$, $\tau(c)$ is isotopic to some $\pm \rho^i(c)$.  Of course the monodromy $h$, hence $\rho$, preserves the linking pairing. (The fiber structure describes an isotopy of a curve to its image under the monodromy.)  Hence we have, for some $0 \leq i  \leq 2$,  $$link(c, c') = -link(\tau(c), \tau(c')) = -link(\pm \rho^i(c), \pm \rho^i(c')) = -link(c, c').$$  This implies that $link(c,  c') = 0$.
\end{proof}

 \begin{figure}[ht!]
  \labellist
\small\hair 2pt
\pinlabel \color{blue}{$\overline{\tau}$} at 285 125
\pinlabel \color{black}{$\gamma_-$} at 190 90
 \endlabellist
    \centering
    \includegraphics[scale=0.8]{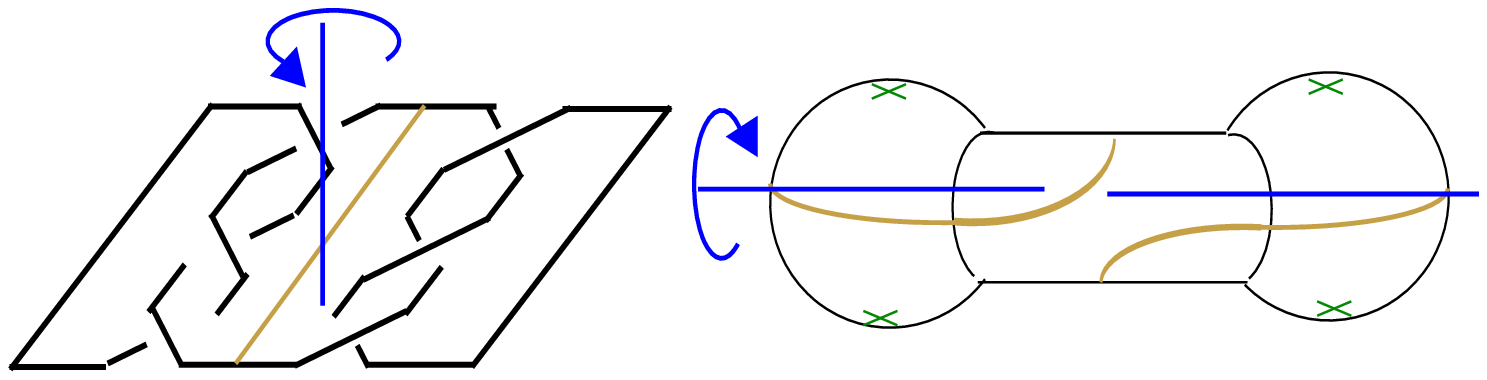}
    \caption{} \label{fig:tau}
    \end{figure}

\begin{cor} \label{cor:enumerate} Suppose $Q \subset S^3$ is the square knot with fiber $F_- \subset S^3$ and $V \subset S^3$ is a disjoint knot.  Then $0$-framed surgery on $Q \cup V$ gives $\#_{2} (S^{1} \times S^{2})$ if and only if, after perhaps some handle-slides of $V$ over $Q$, $V$ lies in $F_-$ and $\rho$ projects $V$ homeomorphically to an essential simple closed curve in $P$.
\end{cor}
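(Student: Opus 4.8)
The plan is to specialize Proposition~\ref{prop:genustwo} to $U = Q$ and then rewrite each of its four conditions in terms of the $3$-fold branched covering $/\rho\colon F \to S^2 \supset P$, using the lemmas established above. Since $Q$ is fibered, every fiber is ambient isotopic (rel $Q$) to the Seifert surface $F_-$, so after a further ambient isotopy condition~(1) of Proposition~\ref{prop:genustwo} may be taken to say simply that $V \subset F_-$; henceforth $V$ is a simple closed curve on $F_- \subset F$.

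The key simplification is to trade the monodromy $h$ for the order-three symmetry $\rho$. Because $h = \sss \circ \rho$ and the hyperelliptic involution $\sss$ fixes the isotopy class of every simple closed curve on $F$, we have that $h(V)$ is isotopic to $\rho(V)$ in $F$. Thus condition~(2) becomes ``$\rho(V)$ can be isotoped off $V$ in $F$'' and condition~(3) becomes ``$\rho(V)$ is not isotopic to $V$ in $F$.'' In particular condition~(3) forces $V$ to be essential in $F$: otherwise $V$, hence also $\rho(V)$, would bound a disk in $F$, and any two such curves are isotopic in the closed surface $F$. Granting that $V$ is essential, Lemma~\ref{lemma:overline}(1) identifies condition~(2) with the statement that $V$ is isotopic to a lift of an essential simple closed curve $\overline{c} \subset P$.

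It remains to see how condition~(3) further constrains such a lift, and that condition~(4) then comes for free. Here I would invoke the Claim established inside the proof of Lemma~\ref{lemma:overline}: a lift of an essential curve $\overline{c} \subset P$ is $\rho$-invariant up to isotopy exactly when it is separating in $F$. Thus if $V$ is separating then $\rho(V) \simeq V$ and condition~(3) fails, while if $V$ is non-separating then (since $\pm 1$ is not an eigenvalue of $\rho_*$ on $H_1(F)$) $\rho(V) \not\simeq V$ and condition~(3) holds. So, among lifts of essential curves, condition~(3) is equivalent to $V$ being non-separating, which by Lemma~\ref{lemma:overline}(3) is equivalent to $\rho$ carrying $V$ homeomorphically onto $\overline{c}$. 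Finally, any $V \subset F_-$ whose image under $/\rho$ is an embedded curve has its framing given by $F_-$ equal to its $0$-framing in $S^3$ by Lemma~\ref{lemma:framing}; since $V$ lies in the interior of $F_-$ this is also the framing given by $F$, so condition~(4) is automatic once (1)--(3) hold in the homeomorphic-projection form. Conversely, if $\rho$ carries $V \subset F_-$ homeomorphically onto an essential simple closed curve of $P$, then $V$ is non-separating, hence essential, and all four conditions of Proposition~\ref{prop:genustwo} are met, so $0$-framed surgery on $Q \cup V$ yields $\#_{2}(S^1 \times S^2)$. Assembling these equivalences gives the corollary.

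The only real subtlety --- hardly an obstacle --- is the bookkeeping for condition~(3), where one must keep track of the two ways an embedded essential curve of $P$ can be covered in $F$: as a single thrice-covered separating curve, or as three disjoint non-separating curves, and decide in each case whether $\rho$ moves $V$ off its isotopy class; this is exactly where the eigenvalue observation and the genus-two parallel-curves argument from the proof of Lemma~\ref{lemma:overline} are used. Everything else is a straightforward translation through Lemmas~\ref{lemma:overline} and~\ref{lemma:framing}.
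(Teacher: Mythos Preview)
Your proof is correct and follows essentially the same approach as the paper: both reduce to Proposition~\ref{prop:genustwo}, translate conditions (2)--(3) from $h$ to $\rho$ via the hyperelliptic involution, and invoke Lemmas~\ref{lemma:overline} and~\ref{lemma:framing}. Your write-up is in fact more detailed than the paper's (which is quite terse in the forward direction), and your direct observation that $h(V)\simeq \rho(V)$ because $\sigma$ fixes isotopy classes is a slightly cleaner route than the paper's separate corollary via $h^3=\sigma$.
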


Note that, according to Lemma \ref{lemma:overline}, essential simple closed curves $\overline{c}$ in $P$ that are such homeomorphic projections are precisely those for which one branch point of $F_L$ (or, equivalently, one branch point from $F_R$) lies on each side of $\overline{c}$. So another way of saying that $\rho$ projects $V$ homeomorphically to an essential simple closed curve in $P$ is to say that $V$ is the lift of an essential simple closed curve in $P$ that separates one branch point of $F_L$ (or, equivalently $F_R$) from the other.

\begin{proof}  This is an application of Proposition \ref{prop:genustwo}.  Suppose that  $0$-framed surgery on $Q \cup V$ gives $\#_{2} (S^{1} \times S^{2})$.  Then, after possible handle-slides over $Q$, $V$ lies in $F_-$.
According to Proposition \ref{prop:genustwo}, in the closed surface $F$, $h(V)$ can be isotoped to be disjoint from $V$ but is not isotopic to $V$.  The result then follows from Lemma \ref{lemma:overline}.

Conversely, suppose that after some handle-slides of $V$ over $Q$, $V$ lies in $F_-$ and $/\rho:F \to S^2 \supset P$ projects $V$ homeomorphically to an essential simple closed curve $\overline{c}$ in $P$.  $\rho(V)$ then can't be isotopic to $V$ in $F$ (or else in fact $\rho(V) = V$ and so $\rho|V: V \to \overline{c}$ would not be a homeomorphism) but $\rho(V)$ is disjoint from $V$, since $\overline{c}$ is embedded.  Thus the first two conditions of Proposition \ref{prop:genustwo} are satisfied.  Lemma \ref{lemma:framing} establishes the third condition.
\end{proof}

\section{The $4$-manifold viewpoint: a non-standard handle structure on $S^4$}    \label{sect:nonstandard}

To understand why the square knot $Q$ is likely to fail Property 2R, it is useful to ascend to 4 dimensions. We use an unusual family of handlebodies from \cite{Go1} to construct a family of 2-component links in $S^3$, each containing $Q$ as one component. Each of these links will fail to satisfy Generalized Property R if a certain associated presentation of the trivial group fails (as seems likely) to satisfy the Andrews-Curtis Conjecture. Each link also has another intriguing attribute: it is smoothly slice, and each component is ribbon, but it is not known whether the entire link is ribbon.

In \cite{Go1}, the first author provided unexpected examples of handle structures on homotopy $4$-spheres which do not obviously simplify to give the trivial handle structure on $S^4$.  At least one family is highly relevant to the discussion above.  This is example \cite[Figure 1]{Go1}, reproduced here as the left side of Figure \ref{fig:Gompffig1b}.  (Setting $k = 1$ gives rise to the square knot.)  The two circles with dots on them represent $1$-handles, indicating that the $4$-manifold with which we begin is $(S^1 \times D^3) \natural (S^1 \times D^3)$ with boundary $\#_{2} (S^{1} \times S^{2})$ given by $0$-surgery.  (Think of the two dotted unknotted circles as bounding disjoint unknotted disks in the $4$-ball; the dots indicate one should scoop these disks out of $D^4$ rather than attach $2$-handles to $D^4$.)  The circles without dots represent $2$-handles attached to $\#_{2} (S^{1} \times S^{2})$ with the indicated framing.  A sequence of Kirby operations in \cite[\S 2]{Go1} shows that the resulting $4$-manifold has boundary $S^3$.

 \begin{figure}[ht!]
  \labellist
\small\hair 2pt
\pinlabel {\tiny $-n-1$} at 156 758
\pinlabel {\tiny $-n-1$} at 451 758
\pinlabel $n$ at 154 678
\pinlabel $n$ at 448 678
\pinlabel $0$ at 177 718
\pinlabel $-1$ at 117 718
\pinlabel $k$ at 85 718
\pinlabel $-k$ at 232 718
\pinlabel $k$ at 377 718
\pinlabel $-k$ at 527 718
\pinlabel $[0]$ at 470 718
\pinlabel $[0]$ at 336 772
\pinlabel $[0]$ at 336 666
\pinlabel $[-1]$ at 422 705
\pinlabel $0$ at 422 733
\pinlabel $0$ at 508 635
\endlabellist
    \centering
    \includegraphics[scale=0.7]{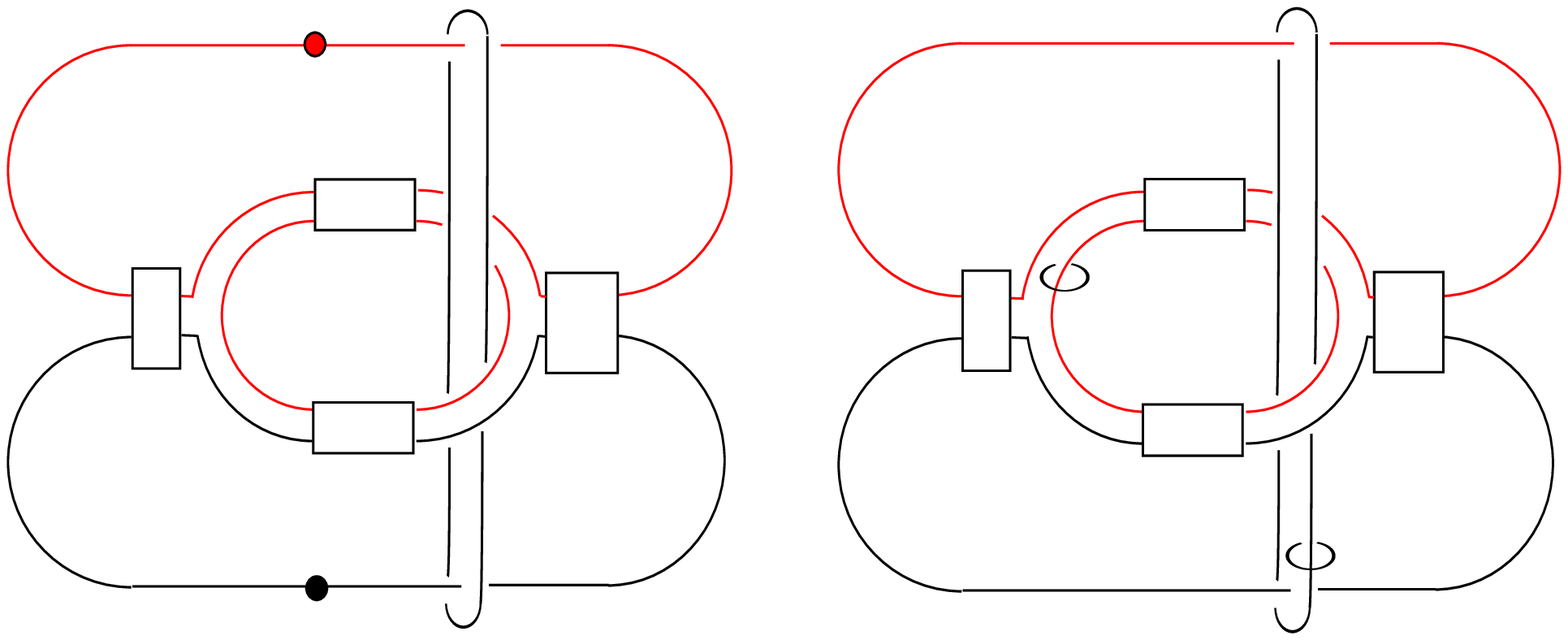}
    \caption{} \label{fig:Gompffig1b}
    \end{figure}

We will be interested in the $4$-manifold that is the trace of the $2$-handle surgeries, the manifold that lies between $\#_{2} (S^{1} \times S^{2})$ and $S^3$.  Our viewpoint, though, is dual to that in \cite{Go1}; the $4$-manifold is thought of as starting with $S^3$ (actually, with $\bdd D^4$) to which two $2$-handles are attached to get $\#_{2} (S^{1} \times S^{2})$.  This dual viewpoint puts the construction solidly in the context of this paper.

A recipe for making the switch in perspective is given in \cite[Example 5.5.5]{GS}.  The result is shown in the right half of Figure \ref{fig:Gompffig1b}.  The circles representing $1$-handles are changed to $0$-framed $2$-handles and then all $2$-handles have their framing integers bracketed.  We call these circles bracketed circles; they form a complicated surgery description of $S^3$.  New $0$-framed $2$-handles are added, each linking one of these original $2$-handles.  These small linking $2$-handles will eventually be the $2$-handles we seek, after we eliminate the other components by following the recipe for turning the original diagram into a trivial diagram of the $3$-sphere.  This process allows  handle slides of bracketed or unbracketed circles over bracketed ones, but does not allow handle slides of bracketed circles over unbracketed circles.  The reduction is done in Figure \ref{fig:Gompffig2b} and roughly mimics  \cite[Figures 7 and 8]{Go1}: The top (red) circle is handle-slid over the bottom (black) circle $C$ via a band that follows the tall thin $[0]$-framed circle.  In order to band-sum to a $0$-framed push-off  of $C$, the $n$ twists that are added in the lower twist box are undone by adding $-n$ twists via a new twist box to the left.  The Hopf link at the bottom of the left diagram is removed in a $2$-stage process: The unbracketed component is handleslid over $C$, so it is no longer linked with the $[0]$-framed component of the Hopf link.  Then $C$ is canceled with the $[0]$-framed circle of the Hopf link.  The result is the picture on the right.  The apparent change is both that the Hopf link has been removed and the framing of $C$ has been changed from $[0]$ to $0$.

 \begin{figure}[ht!]
  \labellist
\small\hair 2pt
\pinlabel {\small $-n-1$} at 150 520
\pinlabel {\small $-n-1$} at 450 520
\pinlabel $n$ at 455 440
\pinlabel $n$ at 150 440
\pinlabel $-n$ at 25 425
\pinlabel $-n$ at 337 425
\pinlabel $0$ at 176 360
\pinlabel $[0]$ at 25 476
\pinlabel $C$ at 275 476
\pinlabel $0$ at 340 476
\pinlabel $[0]$ at 140 360
\pinlabel $[0]$ at 40 530
\pinlabel $[0]$ at 352 530
\pinlabel $[-1]$ at 125 500
\pinlabel $[-1]$ at 433 500
\pinlabel $0$ at 180 495
\pinlabel $0$ at 480 500
\pinlabel $k$ at 78 476
\pinlabel $-k$ at 227 476
\pinlabel $k$ at 380 476
\pinlabel $-k$ at 527 476

\endlabellist
    \centering
    \includegraphics[scale=0.7]{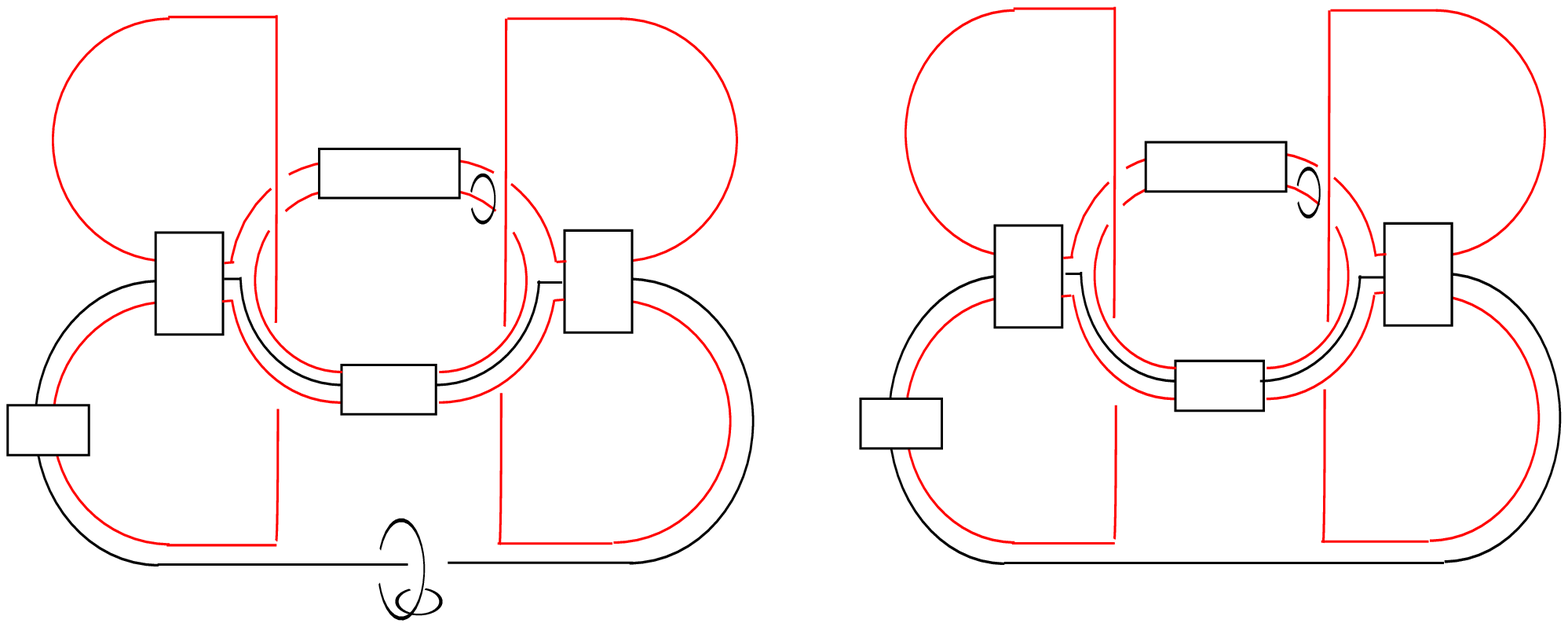}
    \caption{} \label{fig:Gompffig2b}
    \end{figure}

Next (Figure \ref{fig:Gompffig3b}) the large red $[0]$-framed circle is handle-slid over the central red $[-1]$ -framed circle.  That gives the figure on the left.  Note that the twist boxes guaranteee we have pushed off with framing $-1$ as required.  Since there is now only one strand going through the top twist-box, the box can be ignored.  The handle-slide changes the $[0]$-framing on the slid circle to a $[+1]$-framing, since the two curves, if oriented in the same direction through the braid boxes, have linking number $n + (-n-1) = -1$ and, with that orientation, the handle-slide corresponds to a handle subtraction. To get the figure on the right, the lower black circle is moved into position and a flype moves the lower twist-box to a new upper twist-box.  (Note the sphere enclosing the left half of the diagram, intersecting the $n$-twist box and two other strands.)

 \begin{figure}[ht!]
  \labellist
\small\hair 2pt
\pinlabel $n$ at 155 673 %
\pinlabel $n$ at 435 730 %
\pinlabel $-n$ at 35 656 %
\pinlabel $-n$ at 310 650 %
\pinlabel $0$ at 40 705 %
\pinlabel $0$ at 320 705 %
\pinlabel $[1]$ at 50 755 %
\pinlabel $[1]$ at 325 755 %
\pinlabel $[-1]$ at 158 740 %
\pinlabel $[-1]$ at 435 690 %
\pinlabel $k$ at 85 705
\pinlabel $-k$ at 230 705
\pinlabel $k$ at 360 705
\pinlabel $-k$ at 509 705
\endlabellist
    \centering
    \includegraphics[scale=0.7]{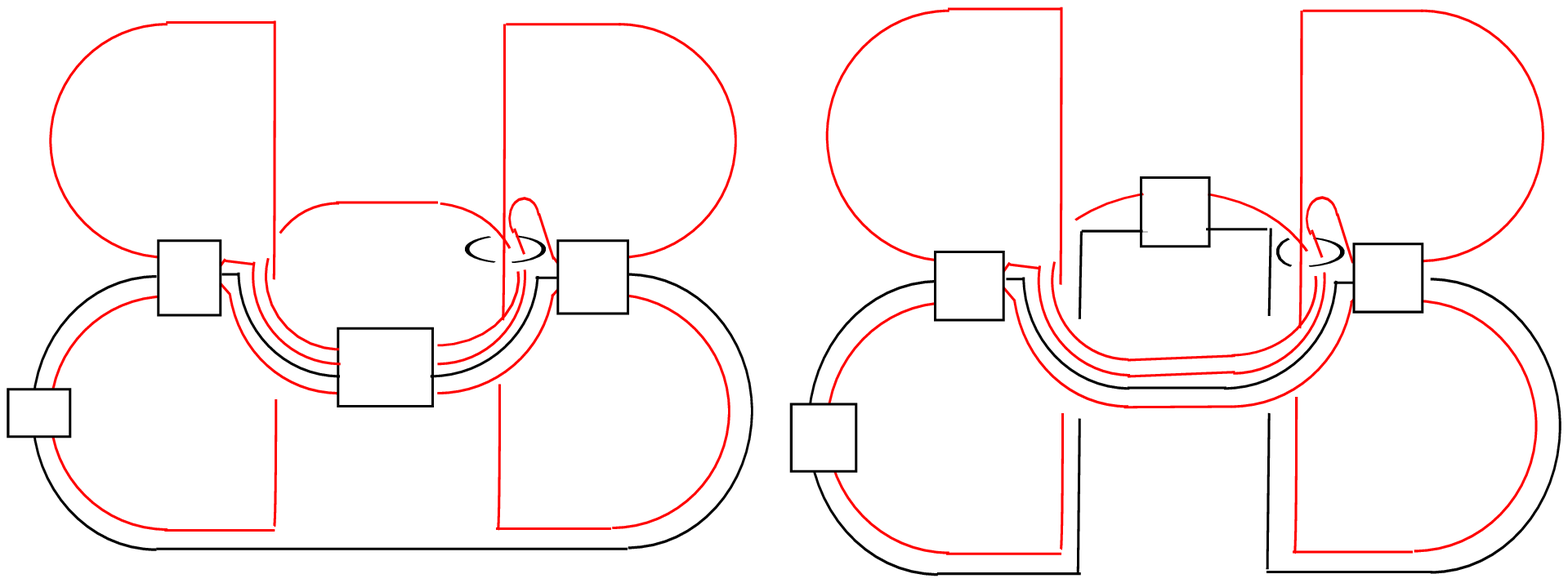}
    \caption{} \label{fig:Gompffig3b}
    \end{figure}

Aim now for the square knot, by setting $k = 1$ and changing the full twists to half twists by a flype.  Figure \ref{fig:Gompffig4} then shows a series of isotopies (clockwise around the figure beginning at the upper left) that simplify the red $[\pm 1]$-framed circles, while dragging along the $0$-framed black circles.

 \begin{figure}[ht!]
  \labellist
\small\hair 2pt
\pinlabel $n$ at 150 365 %
\pinlabel $n$ at 435 370 %
\pinlabel $n$ at 445 180 %
\pinlabel $n$ at 200 175 %
\pinlabel $-n$ at 30 410
\pinlabel $-n$ at 320 415
\pinlabel $-n$ at 40 230
\pinlabel $-n$ at 320 235
%\pinlabel $0$ at 176 360
\pinlabel $0$ at 35 460
\pinlabel $0$ at 325 460
\pinlabel $0$ at 325 300
\pinlabel $0$ at 40 280
\pinlabel $[1]$ at 45 510
\pinlabel $[1]$ at 340 510
\pinlabel $[1]$ at 355 270
\pinlabel $[1]$ at 70 260
\pinlabel $[-1]$ at 130 470
\pinlabel $[-1]$ at 455 410
\pinlabel $[-1]$ at 400 240
\pinlabel $[-1]$ at 240 220
\pinlabel $0$ at 160 450
\pinlabel $0$ at 430 400
\pinlabel $0$ at 470 325
\pinlabel $0$ at 220 270
\endlabellist
    \centering
    \includegraphics[scale=0.7]{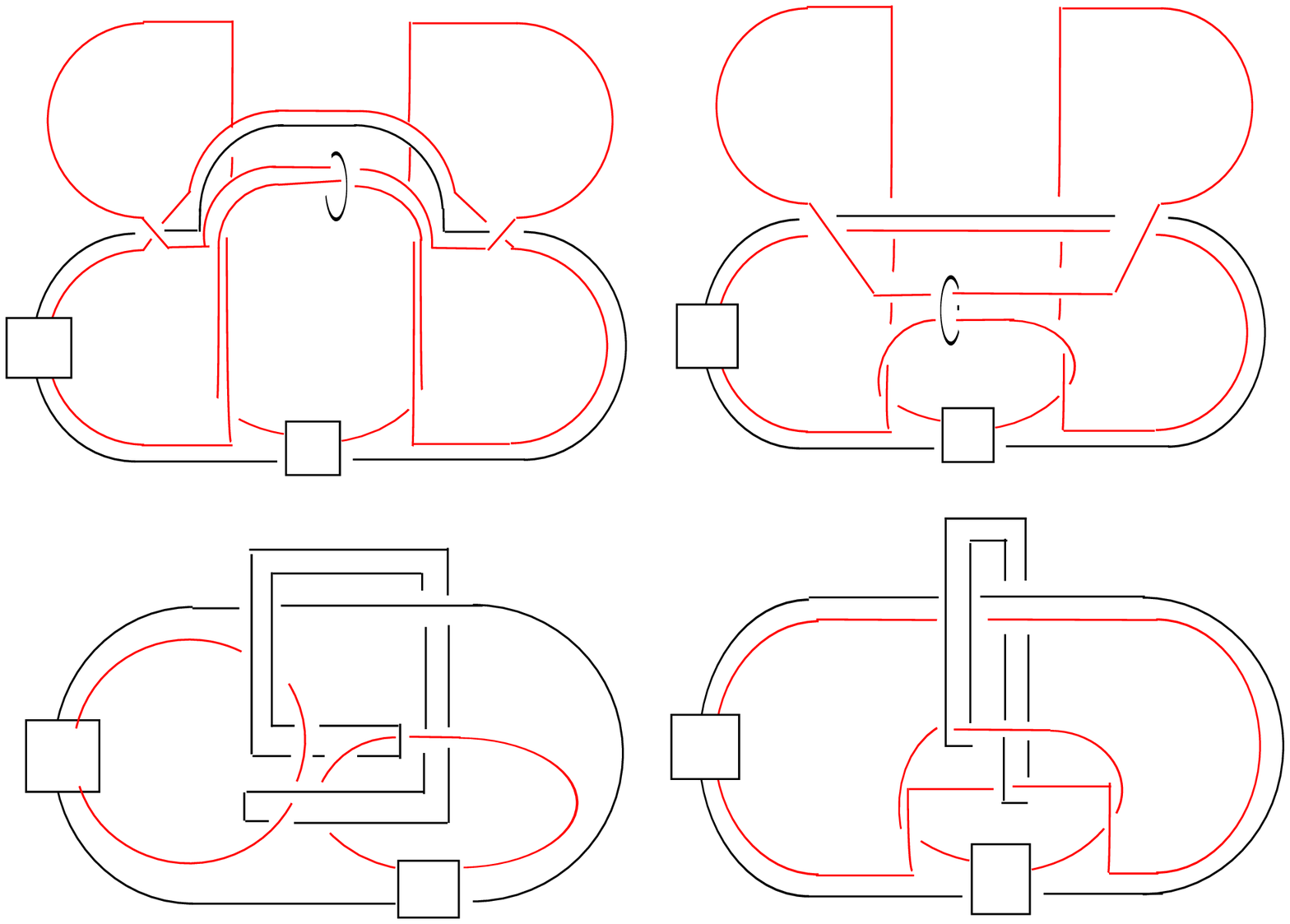}
    \caption{} \label{fig:Gompffig4}
    \end{figure}

Finally, Figure \ref{fig:Gompffig5} shows that the middle $0$-framed component, when the two red $[\pm 1]$-framed components are blown down, becomes the square knot $Q \subset S^3$.  (The other component becomes an interleaved connected sum of two torus knots, $V_n = T_{n, n+1} \# \overline{T_{n, n+1}}$.)

This leaves two natural questions.

     \begin{figure}[ht!]
       \labellist
\small\hair 2pt
     \pinlabel $[1]$ at 75 530
\pinlabel $[-1]$ at 215 460
\endlabellist
    \centering
    \includegraphics[scale=0.7]{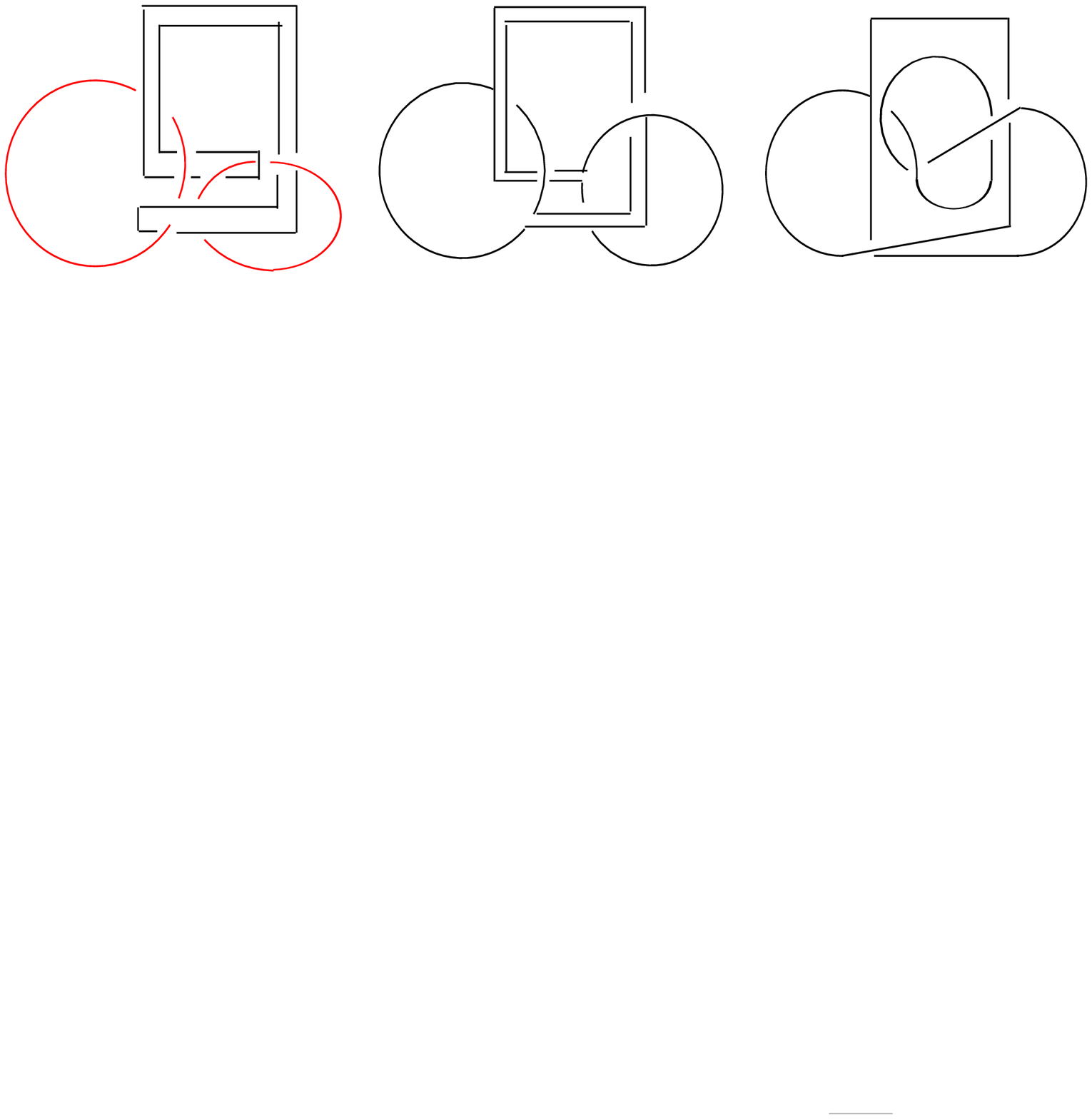}
    \caption{} \label{fig:Gompffig5}
    \end{figure}

\bigskip

{\em Question One:}  As described, $V_n$ does not obviously lie on a Seifert surface for $Q$.  According to Corollary \ref{cor:enumerate}, some handle slides of $V_n$ over $Q$ should alter $V_n$ so that it is one of the easily enumerated curves that do lie on the Seifert surface, in particular it would be among those that are lifts of (half of) the essential simple closed curves in the $4$-punctured sphere $P$.  Which curves in $P$ represent $V_n$ for some $n$?

\bigskip

{\em Question Two:}  Is each $Q \cup V_n, n \geq 3$, a counter-example to Generalized Property R?

\bigskip

This second question is motivated by Figure~\ref{fig:Gompffig1b}. As described in \cite{Go1}, the first diagram of that figure exhibits a simply connected 2-complex, presenting the trivial group as
 $$\langle x, y \; | \; y = w^{-1} x w,\  x^{n+1} = y^n \rangle,$$ 
 where $w$ is some word in $x^{\pm 1},y^{\pm 1}$ depending on $k$ and equal to $yx$ when $k=1$. If the 2-component link $L_{n,k}$ of Figure~\ref{fig:Gompffig3b} (obtained by blowing down the two bracketed circles) can be changed to the unlink by handle slides, then the dual slides in Figure~\ref{fig:Gompffig1b} will trivialize that picture, showing that the above presentation is Andrews-Curtis trivial.  For $k=1$, for example, this is regarded as very unlikely when $n \geq 3$. Since surgery on $L_{n,k}$ is $\#_2(S^1\times S^2)$ by construction, this suggests an affirmative answer to Question Two, which (for any one $n$) would imply:

\bigskip

\begin{conj} \label{conj:not2R} The square knot does not have Property 2R.
\end{conj}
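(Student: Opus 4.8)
\medskip
\noindent Because a knot fails Property~2R as soon as it occurs in a single two--component counterexample to the Generalized Property~R conjecture, it suffices to show that, for some $n\ge 3$, the link $L_{n,1}=Q\cup V_n$ built in Section~\ref{sect:nonstandard} is such a counterexample. That $0$--framed surgery on $L_{n,1}$ yields $\#_2(S^1\times S^2)$ needs nothing new: $L_{n,1}$ is the link displayed in Figures~\ref{fig:Gompffig3b}--\ref{fig:Gompffig5}, obtained by blowing down the two $[\pm 1]$--framed circles in the Kirby--calculus reduction of the right side of Figure~\ref{fig:Gompffig1b}, and surgery on that reduction is $\#_2(S^1\times S^2)$ by construction (these are the $2$--handles dual to the $2$--handles of \cite[Figure~1]{Go1}, attached to $\#_2(S^1\times S^2)$). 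So the entire content is that no sequence of handle slides converts $L_{n,1}$ into a $0$--framed unlink.

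To prove that, I would transport a hypothetical reduction to the dual picture. Suppose handle slides of the two components of $L_{n,1}$ over one another carried $L_{n,1}$ to the $0$--framed unlink. Running the reductions of Figures~\ref{fig:Gompffig2b}--\ref{fig:Gompffig5} backwards re--expresses the trace of the surgery (the cobordism between $\#_2(S^1\times S^2)$ and $S^3$) in the right--hand form of Figure~\ref{fig:Gompffig1b}: a complicated \emph{bracketed} surgery description of $S^3$, together with the two unbracketed $0$--framed $2$--handles forming $L_{n,1}$. The assumed slides are slides of unbracketed $2$--handles over unbracketed $2$--handles --- legal moves in this picture --- and performing them turns the whole diagram into a $0$--framed two--component unlink in a trivially presented $S^3$, i.e.\ the standard handle picture. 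Dualizing as in \cite[Example~5.5.5]{GS} and tracking the effect on the presentation $2$--complex, this converts the handle structure of \cite[Figure~1]{Go1} into the trivial one by moves that translate into Andrews--Curtis moves (relator conjugations and products, and stabilization by a canceling $1$--handle/$2$--handle pair; no slide of a bracketed circle over an unbracketed one --- that is, of a $1$--handle over a $2$--handle --- is ever called for). Thus the presentation $\langle x,y \mid y=(yx)^{-1}x(yx),\ x^{n+1}=y^n\rangle$ of the trivial group would be Andrews--Curtis trivial.

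What remains --- and this is the real obstacle --- is to know that this presentation is \emph{not} Andrews--Curtis trivial for some $n\ge 3$. That is a special case of the notoriously open question about presentations of Akbulut--Kirby and Miller--Schupp type, widely believed to furnish counterexamples to the Andrews--Curtis conjecture but not proven to be so for any individual $n\ge 3$; so the argument establishes Conjecture~\ref{conj:not2R} only conditionally, and I would not expect a way around this. In particular a ``soft'' topological obstruction seems unavailable: the trace cobordisms here are all homeomorphic to the standard ones, each of $Q$ and $V_n$ is ribbon, and $L_{n,1}$ is smoothly slice, so the usual gauge--theoretic and Heegaard--Floer obstructions to handle--slide reductions do not apply; the obstruction genuinely lives at the level of $3$--deformations of $2$--complexes. (First answering Question~One --- locating $V_n$ among the curves of Corollary~\ref{cor:enumerate} --- would give a tidier model for these links, but is not logically needed for the argument.)
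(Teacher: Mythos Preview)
The statement is labeled and treated in the paper as a \emph{conjecture}, not a theorem: the paper gives no proof, only the heuristic that a handle-slide reduction of $L_{n,k}$ to the unlink would force the associated presentation $\langle x,y\mid y=w^{-1}xw,\ x^{n+1}=y^n\rangle$ to be Andrews--Curtis trivial, which is expected to fail for $n\ge 3$. You have correctly reproduced exactly this motivation and, crucially, you explicitly flag that the argument is conditional on an open instance of the Andrews--Curtis problem. So your proposal is not a proof and was never going to be one, but it matches the paper's own reasoning.

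Two small points of comparison. First, the paper packages the obstruction more cleanly and purely $3$-dimensionally: to any $n$-component link $L$ with surgery $\#_n(S^1\times S^2)$ it attaches a balanced presentation of the trivial group (generators from $\pi_1(\#_n(S^1\times S^2))$, relators the meridians of $L$), well defined up to Andrews--Curtis moves and invariant under handle slides; this avoids the back-and-forth through Figures~\ref{fig:Gompffig2b}--\ref{fig:Gompffig5} and the $4$-dimensional dualization. Second, in your description of the dualized moves you list ``stabilization by a canceling $1$--handle/$2$--handle pair'' among the induced Andrews--Curtis moves; that should not appear. Handle slides among the two components of $L_{n,1}$ dualize only to relator-by-conjugate-of-relator moves (together with relator conjugation and inversion coming from basepoint and band choices). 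Introducing a canceling $1$/$2$ pair is precisely the extra move that \emph{destroys} the Andrews--Curtis class, and is the mechanism behind Weak Generalized Property~R; allowing it here would collapse the argument.
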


The invariant we have implicitly invoked here can be described in a purely 3-dimensional way.  Suppose $L$ is an $n$-component framed link that satisfies the hypothesis of Generalized Property~R. Then surgery on $L$ yields $\#_n(S^1\times S^2)$, whose fundamental group $G$ is free on $n$ generators. Its basis $\{g_i\}$ is unique up to Nielsen moves. If we pick a meridian of each component of $L$, attached somehow to the base point, we obtain $n$ elements $\{r_i\}$ that normally generate $G$ (since they normally generate the group of the link complement). Thus, we have a presentation $\langle g_1,\cdots,g_n| r_1,\cdots,r_n\rangle$ of the trivial group. Changing our choices in the construction changes the presentation by Andrews-Curtis moves. If we change $L$ by sliding one component $L_i$ over another $L_j$, then $r_j$ ceases to be a meridian, but it again becomes one via the dual slide over a meridian to $L_i$. This is again an Andrews-Curtis move, multiplying $r_j$ by a conjugate of $r_i$. Thus, the link $L$ up to handle slides determines a balanced presentation of the trivial group up to Andrews-Curtis moves.  Unfortunately, there is presently no way to distinguish Andrews-Curtis equivalence classes from each other. When such technology emerges, it should be able to distinguish handle-slide equivalence classes of links satisfying the hypothesis of Generalized Property~R, such as the links $L_{n,k}$ of Figure~\ref{fig:Gompffig3b}.   For a related perspective, see also \cite{CL}.

\bigskip

{\em Remark:} The link $L_{n,k}$ is interesting for another reason. Recall that it is exhibited in Figure~\ref{fig:Gompffig1b} as the two meridians of the 2-handles, which obviously bound disjoint disks in the pictured 4-manifold. (These are the cores of the 2-handles in the dual picture.) Since the 4-manifold is actually diffeomorphic to a 4-ball \cite{Go1}, it follows that $L_{n,k}$ is smoothly slice. Furthermore, each component is ribbon. (A band move turns the small component in  Figure~\ref{fig:Gompffig3b} into a meridian of each of the bracketed circles, becoming an unlink when these are blown down, and the other component is easily seen to be  $T_{n, n+1} \# \overline{T_{n, n+1}}$.) However, there seems to be no reason to expect  $L_{n,k}$  to be a ribbon link. (In general, a link satisfying the hypothesis of Generalized Property R is slice in a possibly exotic 4-ball, whereas the conclusion of Generalized Property R implies it is ribbon.) Band-summing together the components of  $L_{n,k}$ yields a smoothly slice knot that could potentially fail to be ribbon.

\section{Weakening Property nR}

The evidence above suggests that Property 2R, and hence the Generalized Property R Conjecture, may fail for a knot as simple as the square knot.  There are other reasons to be unhappy with this set of conjectures:   For one thing, there is no clear relation between Property nR and Property (n+1)R:  If $K$ has Property nR, there is no reason to conclude that no $(n+1)$-component counterexample to Generalized Property R contains $K$.  After all, Gabai has shown that {\em every knot has Property 1R}.  Conversely, if $K$ does not have Property nR, so there is an $n$-component counterexample $L$ to Generalized Property R, and $K \subset L$, it may well be that even adding a distant $0$-framed unknot to $L$ creates a link that satisfies Generalized Property R.  So $K$ might still satisfy Property (n+1)R.

A four-dimensional perspective on this last possibility is instructive.  (See also \cite[Section 3]{FGMW}. ) Suppose $L$ is an $n$-component link on which surgery gives $\#_{n} (S^{1} \times S^{2})$.  Suppose further that, after adding a distant $0$-framed $r$-component unlink to $L$, the resulting link $L'$  can be reduced to an $(n+r)$-component unlink by handle-slides.  Consider the $4$-manifold $W$ obtained by attaching $2$-handles to $D^4$ via the framed link $L$, then attaching $\natural_{n} (S^{1} \times B^{3})$ to the resulting manifold along their common boundary $\#_{n} (S^{1} \times S^{2})$. Via \cite{LP} we know there is essentially only one way to do this.  The result is a simply-connected (since no $1$-handles are attached) homology $4$-sphere, hence a homotopy $4$-sphere.  If the $2$-handles attached along $L$ can be be slid so that the attaching link is the unlink, this would show that $W \cong S^4$, since it implies that the $2$-handles are exactly canceled by the $3$-handles.  But the same is true if handle-slides convert $L'$ to the unlink: the $4$-manifold $W$ is the same, but attaching first $r$ more trivial $2$-handles and then canceling with $r$ more $3$-handles has no effect on the topology of the associated $4$-manifold.  So from the point of view of $4$-manifolds, this weaker form of Generalized Property R (in which a distant $r$-component unlink may be added to the original link) would suffice.

From the $4$-manifold point of view there is a dual operation which also makes no difference to the topology of the underlying $4$-manifold: adding also pairs of canceling $1$- and $2$- handles.  From the point of view of Kirby calculus, each such pair is conventionally noted in two possible ways (see Section 5.4 of \cite{GS} and Figure \ref{fig:canceling} below):
\bigskip

\begin{itemize}

\item A dumb-bell shaped object.  The ends of the dumb-bell represent $3$-balls on which a $1$-handle is attached; the connecting rod represents the attaching circle for a canceling $2$-handle.

\bigskip

\item A Hopf link.  One component of the link is labeled with a dot and the other is given framing $0$.  The dotted component represents the $1$-handle and the $0$-framed component represents the attaching circle for a canceling $2$-handle.  Call such an object a canceling Hopf pair and call the union of $s$ such pairs, each lying in a disjoint $3$-ball, a {\em set of $s$ canceling Hopf pairs}.
\end{itemize}

\bigskip

(The rules for handle-slides of these dotted components in canceling Hopf pairs are fairly restricted; see \cite{GS}.  For example, they cannot slide over non-dotted components. Note that while we require the 2-handle of a canceling Hopf pair to have framing 0, we can change it to any even framing by sliding the 2-handle over the dotted circle to exploit the nonzero linking number.)

 \begin{figure}[ht!]
  \labellist
\small\hair 2pt
\pinlabel $0$ at 160 650
\pinlabel $0$ at 510 650
 \endlabellist
    \centering
    \includegraphics[scale=0.6]{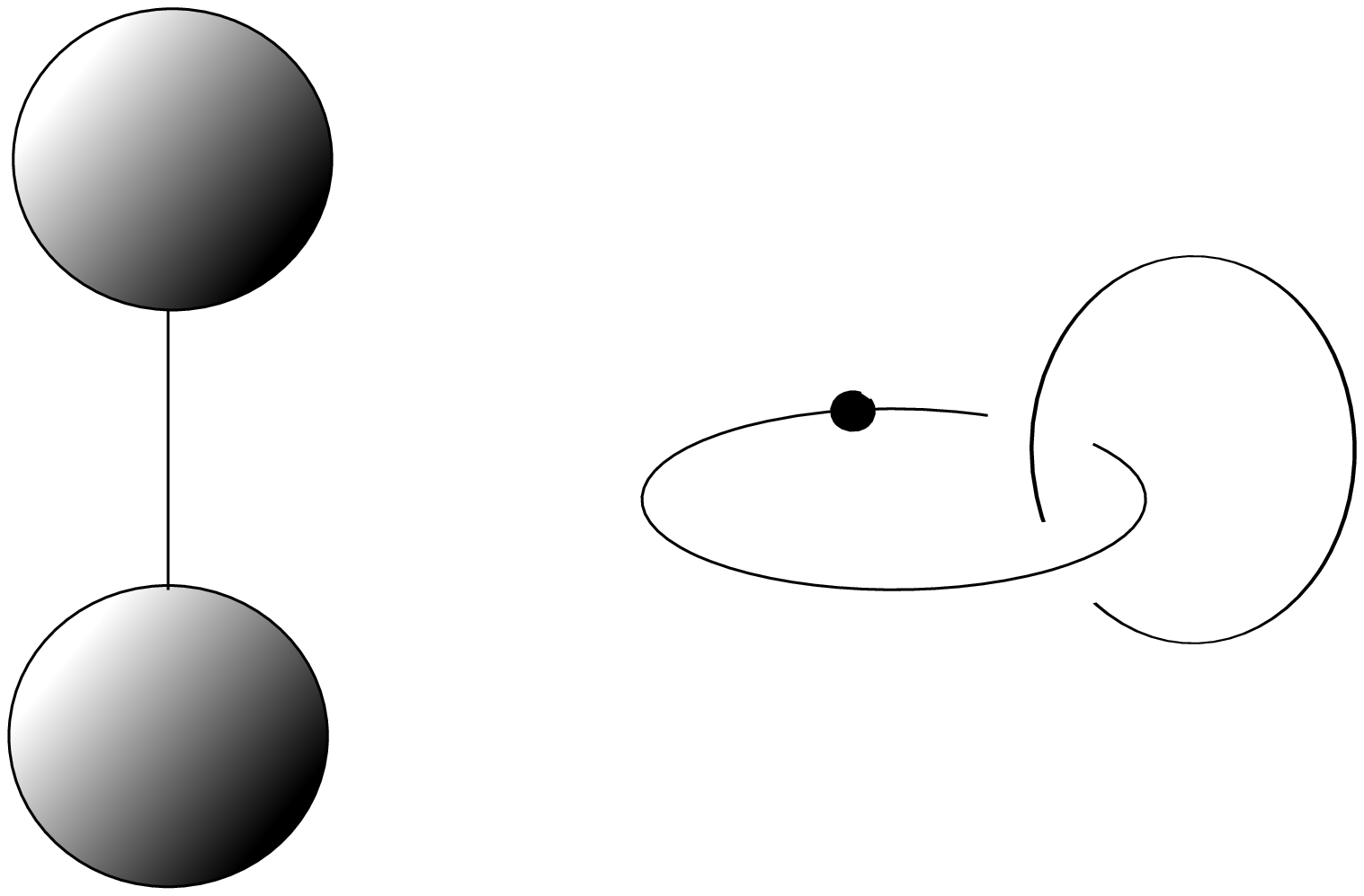}
    \caption{} \label{fig:canceling}
    \end{figure}

The $4$-manifold perspective suggests the following weaker and more awkward version of Generalized Property R:

\begin{conj}[Weak Generalized Property R] \label{conj:weakgenR} Suppose $L$ is a framed link of $n \geq 1$ components in $S^3$, and surgery on $L$ yields $\#_{n} (S^{1} \times S^{2})$.  Then, perhaps after adding a distant $r$-component $0$-framed unlink and a set of $s$ canceling Hopf pairs to $L$, there is a sequence of handle-slides that creates the distant union of an $n+r$ component $0$-framed unlink with a set of $s$ canceling Hopf pairs.
\end{conj}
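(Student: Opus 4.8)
The plan is to pass to closed $4$-manifolds and then lean on Cerf theory. The hypothesis that surgery on the framed $n$-component link $L$ produces $\#_n(S^1\times S^2)$ is equivalent to the statement that the closed $4$-manifold $W$ built by attaching $2$-handles to $D^4$ along $L$ and then gluing on $\natural_n(S^1\times B^3)$ --- equivalently, capping off with $n$ $3$-handles and a $4$-handle --- is a simply connected homology $4$-sphere, hence a homotopy $4$-sphere. In these terms the conclusion to be established reads: after inserting finitely many distant canceling $2$--$3$ handle pairs (the extra $0$-framed unknots) and canceling $1$--$2$ handle pairs (the canceling Hopf pairs), the given handle decomposition of $W$ can be carried by $2$-handle slides to the decomposition in which every $2$-handle cancels a neighboring $1$-handle or $3$-handle, i.e.\ to the trivial decomposition of $S^4$ with a collection of distant canceling pairs attached.

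\textbf{Step 1: recognize $W$ as $S^4$.} For an arbitrary $L$ this is itself an instance of the smooth $4$-dimensional Poincar\'e conjecture, which is why the conjecture cannot at present be established in full generality; but for the family $Q\cup V_n$ constructed in Section~\ref{sect:nonstandard} it is a theorem of the first author \cite{Go1}, proved there by an explicit reduction of the handle structure of Figure~\ref{fig:Gompffig1b} to the empty diagram.

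\textbf{Step 2: promote the diffeomorphism to handle moves.} Granting $W\cong S^4$, Cerf theory --- any two handle decompositions of a fixed closed smooth $4$-manifold are related through isotopies, handle slides, and births and deaths of canceling $(i,i+1)$-handle pairs --- supplies a finite sequence of such moves from the given decomposition of $W$ to the trivial one. I would run this sequence while keeping the $3$- and $4$-handles in the canonical position guaranteed by Laudenbach-Po\'enaru \cite{LP}, so that every intermediate stage is recorded faithfully by a framed link together with dotted circles, and I would check that no $2$-handle is ever slid over a dotted circle in a way not sanctioned by the Kirby calculus for $1$-handles (cf.\ \cite{GS}).

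\textbf{Step 3: make the sequence monotone.} Finally one must reorganize the Cerf sequence into the form the conjecture demands --- all births, then $2$-handle slides only, then all deaths. Since each birth merely introduces a distant, standardly embedded canceling pair, performing every birth that ever occurs at the very outset changes nothing; the real work is to show that each death taking place in the middle of the original sequence can be deferred to the end, that is, that the canceling pair it removes, after being carried along by the intervening slides, is still a distant standard canceling pair and hence may be deleted last rather than mid-sequence. I expect this step to be the principal obstacle: it is a ``directed'', or monotone, form of stabilized Kirby calculus for homotopy $4$-spheres, and it is bound up with the stabilized Andrews-Curtis phenomena discussed earlier in this section, with the control of the numbers $r$ of added unknots and $s$ of added Hopf pairs being part of the same difficulty. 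For the concrete links $Q\cup V_n$ of Figure~\ref{fig:Gompffig3b} one can hope to sidestep the general issue by importing Gompf's reduction from \cite{Go1} outright and verifying, in the dual picture, that it proceeds by $2$-handle slides together only with the birth and later death of canceling pairs of the two permitted types --- exactly the operations the weak conjecture allows --- which would in particular exhibit the square knot as satisfying the weak form of Property~2R.
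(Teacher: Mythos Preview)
The statement you are attempting to prove is a \emph{conjecture}, not a theorem: the paper does not prove it, and indeed immediately afterward proves a Proposition showing that it is equivalent to the Smooth 4-Dimensional Poincar\'e Conjecture restricted to homotopy spheres admitting handle decompositions without 1-handles. Your Step~1 already concedes this (``the conjecture cannot at present be established in full generality''), so what you have written is not a proof of the conjecture but rather a rediscovery of one direction of that equivalence, together with a sketch of how the specific links $L_{n,k}$ might be handled. There is thus no ``paper's own proof'' to compare against.

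On the substance of your sketch: your Step~2/Step~3 route through Cerf theory is exactly how the paper proves the ``$\Sigma\cong S^4 \Rightarrow$ weak GPR'' direction of the equivalence Proposition, and your worry about monotonicity in Step~3 is unwarranted --- it is standard that the Cerf sequence between two handle decompositions can be reorganized so that all births precede all slides, which precede all deaths (this is already implicit in Kirby's original calculus theorem), so no ``principal obstacle'' arises there. For the specific links $Q\cup V_n$, the paper does verify weak Property~2R, but not by dualizing the argument of \cite{Go1} as you propose in Step~3; instead it gives a direct and more general construction (Proposition~\ref{prop:Ln}) that introduces a single canceling Hopf pair and uses an isotopy around an embedded torus to exhibit handle-slide equivalences among all the $L_{n,k}^*$, reducing to the trivially slide-trivial case $n=0$.
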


The move to Weak Generalized Property R destroys the Andrews-Curtis invariant of the previous section, since adding a canceling Hopf pair leaves the surgered manifold, and hence $G$, unchanged, but introduces a new relator that is obviously trivial.  In fact, we will see in the next section that the links $L_{n,k}$ are not counterexamples to this weaker conjecture (even if we set set $r=0$, $s=1$). On the other hand, stabilization by adding a distant unknot to $L$ changes the corresponding presentation by adding a generator $g_{n+1}$ and relator $r_{n+1}=g_{n+1}$, and this preserves the Andrews-Curtis class. Thus, the Hopf pairs seem crucial to the conjecture, whereas it is unclear whether the distant unlink is significant. (Of course, if we allow 1-handles in our original diagram as in Figure~\ref{fig:Gompffig1b}, then the Andrews-Curtis problem seems to make the distant unlink necessary.)

\begin{defin} A knot $K \subset S^3$ has {\bf Weak Property nR} if it does not appear among the components of any $n$-component counterexample to the Weak Generalized Property R conjecture.
\end{defin}

 We have seen that Property nR and Weak Property nR are probably quite different, due to the Hopf pairs. The latter property also exhibits nicer formal behavior due to the distant unlink, in that if $K$ has Weak Property (n+1)R then it has Weak Property nR:  Suppose $K$ appears among the components of an $n$-component framed link $L$ and surgery on $L$ via the specified framing yields $\#_{n} (S^{1} \times S^{2})$. Then the corresponding statement is true for the union $L'$ of $L$ with a distant $0$-framed unknot.  If $K$ has Weak Property (n+1)R then the distant union $L''$ of $L'$ with the $0$-framed $r$-component unlink and a set of $s$ canceling Hopf pairs has a sequence of handle-slides that creates the $0$-framed unlink of $n + r + 1$ components and a set of $s$ canceling Hopf pairs.  But $L''$ can also be viewed as the union of $L$ with the $0$-framed $(r+1)$-component unlink and a set of $s$  canceling Hopf pairs. Thus $K$ satisfies Weak Property~nR.

The Weak Generalized Property R Conjecture is closely related to the Smooth (or PL) 4-Dimensional Poincar\'e Conjecture, that every homotopy 4-sphere is actually diffeomorphic to $S^4$. For a precise statement, we restrict attention to homotopy spheres that admit handle decompositions without 1-handles.

\begin{prop}  The Weak Generalized Property R Conjecture is equivalent to the Smooth 4-Dimensional Poincar\'e Conjecture for homotopy spheres that admit handle decompositions without 1-handles.
\end{prop}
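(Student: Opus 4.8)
The plan is to prove both implications by translating between handle calculus on framed links in $S^3$ and handle decompositions of closed $4$-manifolds, with the theorem of Laudenbach--Po\'enaru \cite{LP} as the bridge: every self-diffeomorphism of $\#_n(S^1\times S^2)$ extends over $\natural_n(S^1\times B^3)$, so a closed $4$-manifold built from a $4$-manifold $Y$ with $\partial Y=\#_n(S^1\times S^2)$ by attaching $n$ $3$-handles and a $4$-handle depends only on $Y$, and those $3$-handles may be arranged to cancel any $n$ $2$-handles of $Y$ whose belt spheres form a standard basis of $H_1(\partial Y)$. In particular $S^4$ is obtained by attaching $2$-handles to $D^4$ along any $0$-framed unlink and then adding the dual $3$-handles and a $4$-handle.

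For the implication Weak Generalized Property~R $\Rightarrow$ the restricted $4$-dimensional Poincar\'e Conjecture, I would start with a homotopy $4$-sphere $X$ carrying a handle decomposition without $1$-handles, reorganize it (without introducing $1$-handles) to have a single $0$-handle, a single $4$-handle and, since $\chi(X)=2$, equal numbers $n$ of $2$- and $3$-handles, and observe via \cite{LP} that the $2$-handles are attached along a framed link $L\subset\partial D^4=S^3$ on which surgery gives $\#_n(S^1\times S^2)$. Applying the hypothesis to $L$, one adjoins a distant $r$-component $0$-framed unlink (together with $r$ cancelling $3$-handles on the $4$-manifold side) and a set of $s$ canceling Hopf pairs, all of which leaves $X$ unchanged, and then performs the promised handle slides, which also leave $X$ unchanged, bringing the $2$-handle link to the distant union of an $(n+r)$-component $0$-framed unlink with the $s$ Hopf pairs; cancelling the Hopf pairs exhibits $X$ as $D^4$ with an $(n+r)$-component $0$-framed unlink of $2$-handles capped by $(n+r)$ $3$-handles and a $4$-handle, which is $S^4$ by the opening remark.

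For the converse, given an $n$-component framed link $L$ with surgery $\#_n(S^1\times S^2)$, I would cap off the surgery trace with $\natural_n(S^1\times B^3)$ to get (unambiguously, by \cite{LP}) a closed $4$-manifold $X$ with a $1$-handle-free decomposition; $X$ is a simply connected homology sphere, hence a homotopy $4$-sphere, so the hypothesis gives $X\cong S^4$. The remaining task is to extract the conclusion of Weak Generalized Property~R from the single fact $X\cong S^4$: the given decomposition $\mathcal H_0$ and the trivial decomposition $h^0\cup h^4$ both present $S^4$, so by Cerf theory (stable equivalence of handle decompositions) there are collections $A$, $B$ of cancelling handle pairs with $\mathcal H_0+A$ and $(h^0\cup h^4)+B$ related by handle slides and isotopies alone. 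The main obstacle, which I expect to require genuine care, is to normalize so that every pair in $A$ and $B$ is either a $1/2$ pair realized as a canceling Hopf pair or a $2/3$ pair realized as a distant $0$-framed unknot together with its cancelling $3$-handle, and so that one keeps a single $0$- and $4$-handle throughout; eliminating the $0/1$ and $3/4$ pairs and absorbing excess $0$- and $4$-handles would be done by handle trading together with connectivity of $X$ and of its complements. Once this normalization is in hand, $\mathcal H_0+A$ is exactly $L$ with a distant $0$-framed unlink and some canceling Hopf pairs adjoined (then capped off), while $(h^0\cup h^4)+B$ is exactly a distant $0$-framed unlink together with some canceling Hopf pairs (then capped off); a count of $0$-, $1$-, $2$- and $3$-handles forces the latter unlink to have $n+r$ components and the latter collection to have $s$ Hopf pairs, where $r$ and $s$ enumerate the $2/3$- and $1/2$-pairs of $A$. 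Since the relating slides and isotopies act on the $2$-handle attaching data precisely as handle slides of links (the $3$-handle slides among them leaving the link alone), this says exactly that $L$, after adjunction of a distant $0$-framed unlink and a set of $s$ canceling Hopf pairs, is carried by handle slides to the distant union of an $(n+r)$-component $0$-framed unlink with that set of Hopf pairs, which is the conclusion of Weak Generalized Property~R. One should note in closing that the restriction to homotopy spheres without $1$-handles is exactly right: it is the class of homotopy spheres arising as the $X$ of the converse construction, and it is precisely what allows the forward direction to get started.
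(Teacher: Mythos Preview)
Your proof follows essentially the same route as the paper's: for the forward direction you both read off a framed link from the $1$-handle-free decomposition, stabilize and slide using the hypothesis, and invoke \cite{LP} to finish; for the converse you both build the homotopy sphere from the surgery trace, apply the Poincar\'e hypothesis, and then use Cerf theory to relate the two handle decompositions through births/deaths and slides, interpreting the added $1/2$ and $2/3$ pairs as Hopf pairs and distant unknots. The one point the paper makes explicit that you gloss over in your normalization step is why the $1/2$ canceling pairs can be taken to be \emph{$0$-framed} Hopf pairs: the paper argues that one first arranges the $2$-handle framings to be even by making the diagram respect the unique spin structure on $\Sigma$, and then slides over the dotted circle to reduce to framing~$0$.
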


While there are various known ways of constructing potential counterexamples to the Smooth 4-Dimensional Poincar\'e Conjecture, each method is known to produce standard 4-spheres in many special cases. (The most recent developments are \cite{Ak}, \cite{Go2}.) Akbulut's recent work \cite{Ak} has eliminated the only promising potential counterexamples currently known to admit handle decompositions without 1-handles. For 3-dimensional renderings of the full Smooth 4-Dimensional Poincar\'e Conjecture and other related conjectures from 4-manifold theory, see \cite{FGMW}.

\begin{proof}  Suppose $\Sigma$ is a homotopy sphere with no 1-handles and $n$ $2$-handles.  Since there are no $1$-handles, the $2$-handles are attached to some framed $n$-component link $L \subset S^3$ in the boundary of the unique $0$-handle $D^4$ in $\Sigma$. Since $\Sigma$ has Euler characteristic 2, there are $n$ 3-handles attached to the resulting boundary, showing that surgery on $L$ is $\#_n(S^1\times S^2)$.  If the Weak Generalized Property R Conjecture is true, there are some $r, s$ so that when a distant $0$-framed $r$-component unlink and a distant set of $s$ canceling Hopf pairs is added to $L$ (call the result $L'$) then after a series of handle slides, $L'$ becomes the distant union of an $n+r$ component unlink and a set of $s$ canceling Hopf pairs.

To the given handle decomposition of $\Sigma$, add $r$ copies of canceling $2$- and $3$- handle pairs and $s$ copies of canceling $1$- and $2$-handle pairs.  After this change of handle decomposition, the manifold $\Sigma_2$ that is the union of $0$-, $1$- and $2$-handles can be viewed as obtained by the surgery diagram $L' \subset \bdd D^4$.  After a sequence of handle-slides, which preserve the diffeomorphism type of $\Sigma_2$, $L'$ is simplified as above; it can be further simplified by canceling the $1$- and $2$-handle pairs given by the set of $s$ canceling Hopf pairs.  What remains is a handle description of $\Sigma_2$ given by $0$-framed surgery on an unlink of $n+r$ components.

Since $\Sigma$ has Euler characteristic 2, it is obtained by attaching exactly $(n+r)$ $3$-handles and then a single $4$-handle to $\Sigma_2$.  If we view $\Sigma_2$ as obtained by attaching $2$-handles via the $0$-framed $(n+r)$-component unlink, there is an obvious way to attach some set of $(n+r)$ $3$-handles so that they exactly cancel the $2$-handles, creating $S^4$.  It is a theorem of Laudenbach and Poenaru \cite{LP} that, up to handle-slides, there is really only one way to attach $(n+r)$ $3$-handles to $\Sigma_2$.  Hence $\Sigma$ is diffeomorphic to $S^4$ as required.

Conversely, if $0$-framed surgery on an $n$-component link $L \subset S^3$ gives $\#_{n} (S^{1} \times S^{2})$ then a smooth homotopy $4$-sphere $\Sigma$ can be constructed by attaching to the trace of the surgery $n$ $3$-handles and a $4$-handle.  If $\Sigma$ is $S^4$, then standard Cerf theory says one can introduce some canceling $1$- and $2$- handle pairs, plus some canceling $2$- and $3$- handle pairs to the given handle description of $\Sigma$, then slide handles until all handles cancel.  But introducing these canceling handle pairs, when expressed in the language of framed links, constitutes the extra moves that are allowed under Weak Generalized Property R. (To see that the framings in the canceling Hopf pairs can be taken to be 0, first arrange them to be even by choosing the diagram to respect the unique spin structure on $\Sigma$, then slide as necessary to reduce to the 0-framed case.)\end{proof}

\section{How a Hopf pair can help simplify}

With the Weak Generalized Property R Conjecture in mind, return now to the square knot example of Section \ref{sect:nonstandard}.   In \cite{Go1} it is shown that the introduction of a canceling $2, 3$-handle pair does make Figure~\ref{fig:Gompffig1b} equivalent by handle-slides to the corresponding canceling diagram.  In our dualized context, that means that the introduction of a canceling $1, 2$ handle pair (in our terminology, a canceling Hopf pair) should allow $Q \cup V_n$ (or more generally any $L_{n,k}$) to be handle-slid until the result is the union of the unlink and a canceling Hopf pair.  In particular, the square knot examples provide no evidence against the Weak Generalized Property R Conjecture.

It is a bit mysterious how adding a Hopf pair can help to simplify links. Since the dotted circle is not allowed to slide over the other link components, it may seem that any slides over its meridian would only need to be undone to remove extraneous linking with the dotted circle. In this section, we give a general procedure that exploits a nontrivial self-isotopy of the meridian to create families of potential counterexamples to Generalized Property~R. The examples $L_{n,k}$ of Section~\ref{sect:nonstandard} arise as a special case for each fixed $k$.  This suggests that Generalized Property~R probably fails frequently for links constructed by this method, whereas Weak Generalized Property~R does not.

\bigskip

Begin with an oriented 3-manifold $M$ containing a framed link $L=L'\cup L''$.  We will construct an infinite family of framed links in $M$ that may not be handle-slide equivalent, but become so after a Hopf pair is added. (In our main examples, $M$ will be $S^3$ with $L$ a 0-framed 2-component link.) Let $M'$ be the manifold made from $M$ by surgery on $L'$.

Let $\varphi: T^2\to M'$ be a generic immersion that is an embedding on each fiber $S^1\times\{t\}$, $t\in S^1=\R/\zed$.  Denote the image circle $\varphi(S^1\times\{t\})$ by $C_t$. Since $\varphi$ is generic, it is transverse both to $L''$ and to the core of the surgery solid tori in $M'$.  In particular,
\begin{itemize}
\item only finitely many circles $C_{t_i}$  intersect $L''$
\item each such circle $C_{t_i}$ intersects $L''$ in exactly one point
\item the curves $C_{t_i}$ are pairwise disjoint and
\item each $C_{t_i}$ lies in the link complement $M-L' \subset M'$.
\end{itemize}

The immersion $\varphi$ determines an infinite family of framed links $L_n= L'\cup L_n''$ in $M$, $n\in \zed$: For each $t_i$, there is an annulus $A_i$ in $M-L'$ centered on $C_{t_i}$, transverse to $\varphi$ near $S^1\times\{t_i\}$ and intersecting $L''$ in an arc, with $A_i$ oriented so that its positive normal direction corresponds to increasing $t$. See Figure~\ref{fig:Ai}, where the left and right edges of each diagram are glued together. The annuli $A_i$ are unique up to isotopy, and we can assume they are pairwise disjoint. Let $L_n''$ be the framed link obtained from $L''$ by diverting each arc $L''\cap A_i$ so that it spirals $|n|$ times around in $A_i$ (relative to its original position) as in Figure~\ref{fig:Ai}, starting with a right turn iff $n>0$. (Thus $L_0=L$.) We specify the framings by taking them to be tangent to $A_i$ everywhere inside these annuli (before and after) and unchanged outside. Note that $L_n$ depends on our initial choices of how to isotope the curves $C_{t_i}$ off of the surgery solid tori, but changing the choices only modifies $L_n$ by handle slides of $L_n''$ over $L'$.

 \begin{figure}[ht!]
 \labellist
\small\hair 2pt
\pinlabel  $t$ at 50 62
\pinlabel  $A_i$ at 100 95
\pinlabel  $A_i$ at 360 95
\pinlabel  {${\rm Im}\; \varphi$} at 85 20
\pinlabel  $C_t$ at 275 80
\pinlabel  $C_{t_i}$ at 245 62
\pinlabel  $L''$ at 145 130
\pinlabel  $L''_2$ at 420 130
\pinlabel  ${\rm framing}$ [l] at 170 130
\endlabellist
     \centering
    \includegraphics[scale=0.7]{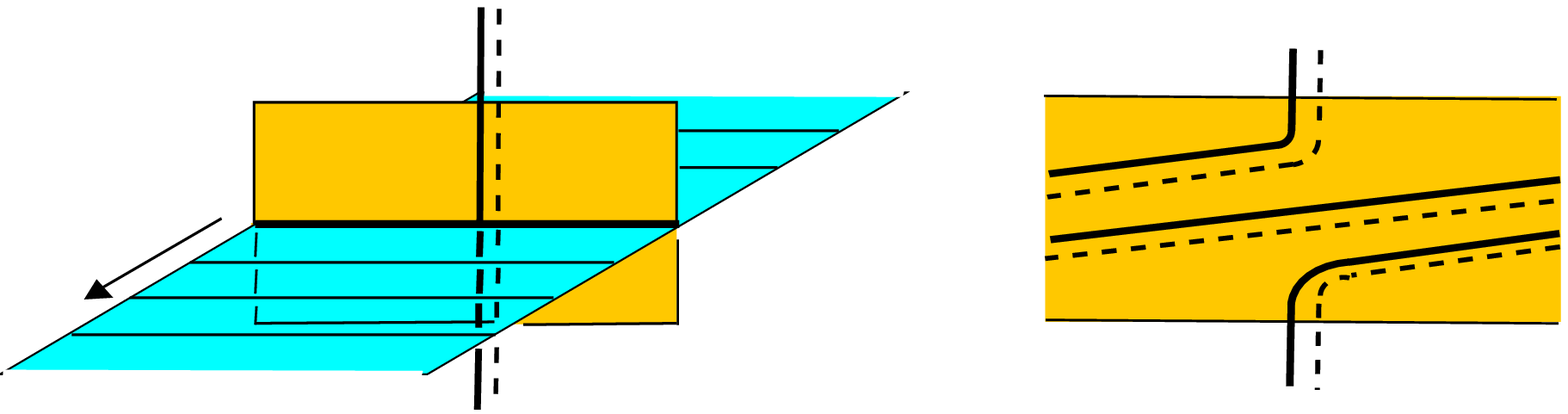}
    \caption{} \label{fig:Ai}
    \end{figure}

In our main examples below, $\varphi$ is an embedding. Whenever this occurs, there is a self-diffeomorphism of $M'$ sending $L''$ onto  $L_n''$, namely the $n$-fold Dehn twist along $\im \varphi$ parallel to the circles $C_t$. This shows directly in this case that the manifolds obtained by surgery on $L_n$ are all diffeomorphic, regardless of $n$. In general, we can obtain more specific information:

\begin{prop} \label{prop:Ln}
For $L\subset M$ and $\varphi$ a generic immersion as above, suppose that $C_0$ is disjoint from all other circles $C_t$  and bounds an embedded disk in $M$ whose algebraic intersection with nearby curves $C_t$ is $\pm 1$. Then all of the framed links $L_n$ become handle-slide equivalent after the addition of a single canceling Hopf pair.
\end{prop}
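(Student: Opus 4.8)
The plan is to show that, for a single canceling Hopf pair $H=\delta\cup\mu$ built from the disk $D$, every $L_n\cup H$ is handle-slide equivalent to $L_0\cup H$. Since the pair $H$ will depend only on $D$ (not on $n$) and will be returned to its starting position at the end of each step, it is enough to produce, for each $n$, a sequence of handle slides carrying $L_{n+1}\cup H$ to $L_n\cup H$, and then to induct on $|n|$.

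First I would build $H$. Let $\delta$ be a small parallel pushoff of $C_0$ bounding a parallel pushoff $D^\delta$ of $D$, mark $\delta$ with a dot so that it represents a $1$-handle, and let $\mu$ be a tiny $0$-framed meridian of $\delta$. Since $C_0\subset M-L'$ and $D$ may be taken disjoint from $L'$ (it is permitted to meet $L''$, as $2$-handles are allowed to run over the $1$-handle), this is a legitimate canceling Hopf pair disjoint from $L$.

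The core of the argument is to use $D$ and $\varphi$ to route handle slides of the components of $L_{n+1}''$ over $\mu$ so that a single slide removes one full spiral from every annulus $A_i$ at once. The hypothesis on $D$ is what makes this possible: because $D$ is an embedded disk bounded by $C_0$ meeting each nearby fiber $C_t$ in one algebraic point, a parallel copy of $\mu$ (a meridian of $\delta\simeq C_0$) can be isotoped in $M'$ so as to run parallel to the fibers of $\varphi$ near $t=0$ and then be carried once around the $t$-circle by $\varphi$; the resulting band, used to slide a component of $L_{n+1}''$ over $\mu$, passes through each annulus $A_i$ exactly once and winds once around its $S^1$-direction relative to $L''$, so the effect is precisely to convert $L_{n+1}''$ into $L_n''$. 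Translating this $M'$-isotopy and this slide into the surgery diagram in $M$ costs only isotopy in $M$ together with handle slides over $L'$, all of which are allowed. The slide also changes the linking of the affected component with $\delta$ and alters its framing by an even number; the excess linking with $\delta$ is then removed by further slides over $\mu$ routed as ordinary small meridians of $\delta$, and the framing is corrected by sliding $\mu$ over $\delta$ (permitted, and able to adjust the framing of the $2$-handle by any even amount). Reversing the initial isotopy returns $\mu$, and hence $H$, to its starting position, so $L_{n+1}\cup H$ and $L_n\cup H$ are handle-slide equivalent.

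The step I expect to be the main obstacle is exactly the middle one: verifying in detail that a single, appropriately chosen slide over $\mu$ — with band following $\varphi$ once around the $t$-circle — really does unwind one spiral in every $A_i$ simultaneously, and then doing the bookkeeping for the induced framing and $\delta$-linking corrections. A further subtlety is that when $\varphi$ is only immersed there is no global Dehn twist along $\im\varphi$ to invoke, so one must carry out the routing of the band locally near each $C_{t_i}$ and near $C_0$, using only the hypotheses that $C_0$ is disjoint from every other $C_t$ and that $D$ meets the nearby fibers algebraically once, and then check that these local choices assemble into one embedded band.
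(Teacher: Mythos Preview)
Your plan has a real gap at exactly the point you flag as ``the main obstacle,'' but the problem is structural rather than bookkeeping. Sliding a component of $L''_{n+1}$ over the $0$-framed meridian $\mu$ is a band-sum with a small $0$-framed unknot (linked once with $\delta$). No matter how the band is routed through the annuli $A_i$, this operation cannot remove a spiral: in $M$ the circle $\mu'$ bounds a tiny disk, so after the slide you can retract the finger and recover the original component (up to its linking with $\delta$). Your remark that $\mu$ ``can be isotoped in $M'$ so as to run parallel to the fibers of $\varphi$'' does not help, since $M'$ is surgery on $L'$, not on $\delta$; in $M'$ the curve $\mu$ is still a small unknot, not a fiber $C_t$. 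There is also a second difficulty: your $\delta$, being the $0$-framed pushoff of $C_0$, is \emph{not} disjoint from $\im\varphi$ (the torus induces the $\pm 1$-framing on $C_0$, not the $0$-framing), so any sweep around the $t$-circle would have to cross $\delta$.

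The paper's argument reverses the roles and chooses the framings carefully. The dotted circle $C^*$ is placed at the $\pm 1$-framed pushoff of $C_0$, which \emph{is} disjoint from $\im\varphi$; the new $2$-handle $C$ is placed along $C_0$ itself with framing $\pm 2$ in $M$ (hence $\pm 1$ relative to $\varphi$). One checks that this pair reduces to a standard canceling Hopf pair by sliding $C$ once over $C^*$ and then clearing $L$ off the disk bounded by $C^*$. Now $C$ is the object that is carried once around the family $C_t$, avoiding $C^*$ throughout; each time $C$ meets $L''_n$ transversely in some $A_i$, one slides $L''_n$ over $C$, and precisely because $C$ carries the $\pm 1$ framing relative to $A_i$, that single slide inserts or removes one spiral (this is the content of the paper's Figure~\ref{fig:cross}). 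Passages of $C$ through surgery solid tori are slides over $L'$. After a full circuit, $L_n^*$ has become $L_{n\pm 1}^*$. The mechanism you are looking for is thus carried by the framing of a large moving $2$-handle, not by a band attached to a static $0$-framed meridian.
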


\noindent Since $C_0$ is nullhomologous in $M$, it has a canonical longitude, determined by any Seifert surface in $M$. Thus, the last hypothesis can be restated by saying that $C_0$ is an unknot in $M$ for which the immersion $\varphi$ determines the $\pm 1$ framing.

To see that the hypotheses are not unreasonable, note that they are satisfied for $M=S^3$, $L'$ empty and $C_0$ a $(1,\pm 1)$-curve in the standard torus $T$. In this case, the links $L''_n$ are obviously all equivalent for any fixed $L''$. (Since $T$ is embedded, the links $L''_n$ are all related as above by Dehn twisting along $T$ in $M'=M=S^3$. These Dehn twists are isotopic to the identity by an isotopy fixing one complementary solid torus of $T$ and twisting the other by both of its circle actions.) The more interesting examples $L_{n,k}$ that arise in \cite{Go1} will be exhibited below. These all arise from the same general procedure: Start with a 0-framed unlink in $S^3$. Slide handles to create a more complicated link $L$ satisfying Generalized Property~R. Locate a suitable immersion $\varphi$, then apply Proposition~\ref{prop:Ln} to create an infinite family $\{L_n\}$ of links that still satisfy the hypothesis of Generalized Property~R, but not necessarily the conclusion for most $n$. One can generalize still further, for example replacing the domain of $\varphi$ by a Klein bottle, although the authors presently have no explicit examples of this.

\begin{proof}[Proof of Proposition~\ref{prop:Ln}]
Since $\varphi$ is an immersion and $C_0$ is disjoint from all other $C_t$, there is a neighborhood of $C_0$ in $M$ intersecting $\im \varphi$ in an annulus. Let $C^*$ be a circle in this neighborhood, parallel to $C_0$ and disjoint from  $\im \varphi$. Thus, it is pushed off of $C_0$ by the $\pm 1$ framing in $M$. (All signs in the proof agree with the one in the statement of the theorem.) Let $L^*_n\subset M$ be the framed link obtained from $L_n$ by adding a $\pm 2$-framed component along $C_0$ and a dotted circle along $C^*$. We will show that $L_n^*$ is obtained from $L_n$ by adding a canceling Hopf pair and sliding handles, and that the links $L_n^*$ are all handle-slide equivalent. (As usual, the dotted circle is not allowed to slide over the other link components.) This will complete the proof.

To reduce $L^*_n$ to $L_n$, slide $C_0$ over the dotted circle $C^*$ using the simplest band connecting the parallel circles. This changes $C_0$ to a 0-framed meridian of $C^*$. (To compute the framing, recall that the dotted circle is automatically 0-framed, and that if $C_0$ and $C^*$ have parallel orientations, we are subtracting handles with linking number $\pm 1$.) The dotted circle $C^*$ bounds a disk $D\subset M$ and now has a 0-framed meridian, but this is not yet a canceling Hopf pair since the link $L$ may still intersect $D$. However, we can easily remove each intersection by sliding the offending strand of $L$ over the meridian. This does not change $L$ but liberates the Hopf pair as required.

To see that the links $L_n^*$ are handle-slide equivalent, we wish to isotope their common component $C$ at $C_0$ around the continuous family $C_t$, returning to the original position at $C_1=C_0$. By construction, $C$ will never meet the dotted circle at $C^*$, but it will meet components of $L_n$. Since the isotopy lies in $M'$, each encounter with $L'$ consists of a push across a surgery solid torus in $M'$, which is precisely a handle slide in $M$ of $C$ over a component of $L'$. On the other hand, $L''_n$ transversely intersects the path of $C$, once at each annulus $A_i$, so we must somehow reverse the crossing of $C$ with $L_n''$ there. Since the framing of $C$ has $\pm 1$ twist relative to $\varphi$, and hence to $A_i$, we can reverse the crossing by sliding $L_n''$ over $C$. This is shown by Figure~\ref{fig:cross} (or its mirror image if the sign is $-1$), where  $A_i$ is drawn as in Figure~\ref{fig:Ai}, but after a diffeomorphism so that $L_n''$ appears as a vertical line. (To see that the framing transforms as drawn, note that the right twist in the framing of the curve parallel to $C$ cancels the left twist introduced by the self-crossing added to $L_n''$ by the band-sum.) When $C$ returns to its original position at $C_1=C_0$, the link has been transformed to $L_{n\pm1}^*$ by handle slides, completing the proof.
\end{proof}

 \begin{figure}[ht]
 \labellist
\small\hair 2pt
\pinlabel  $A_i$ at 20 370
\pinlabel  $A_i$ at 20 200
\pinlabel  $A_i$ at 20 33
\pinlabel  $C$ at 65 390
\pinlabel  $C$ at 65 210
\pinlabel  $C$ at 65 65
\pinlabel  {${\rm pushed\; off} \; C$} at 55 450
\pinlabel  $L''_n$ at 105 480
\pinlabel  $L''_{n+1}$ at 100 147
\endlabellist
     \centering
    \includegraphics[scale=0.7]{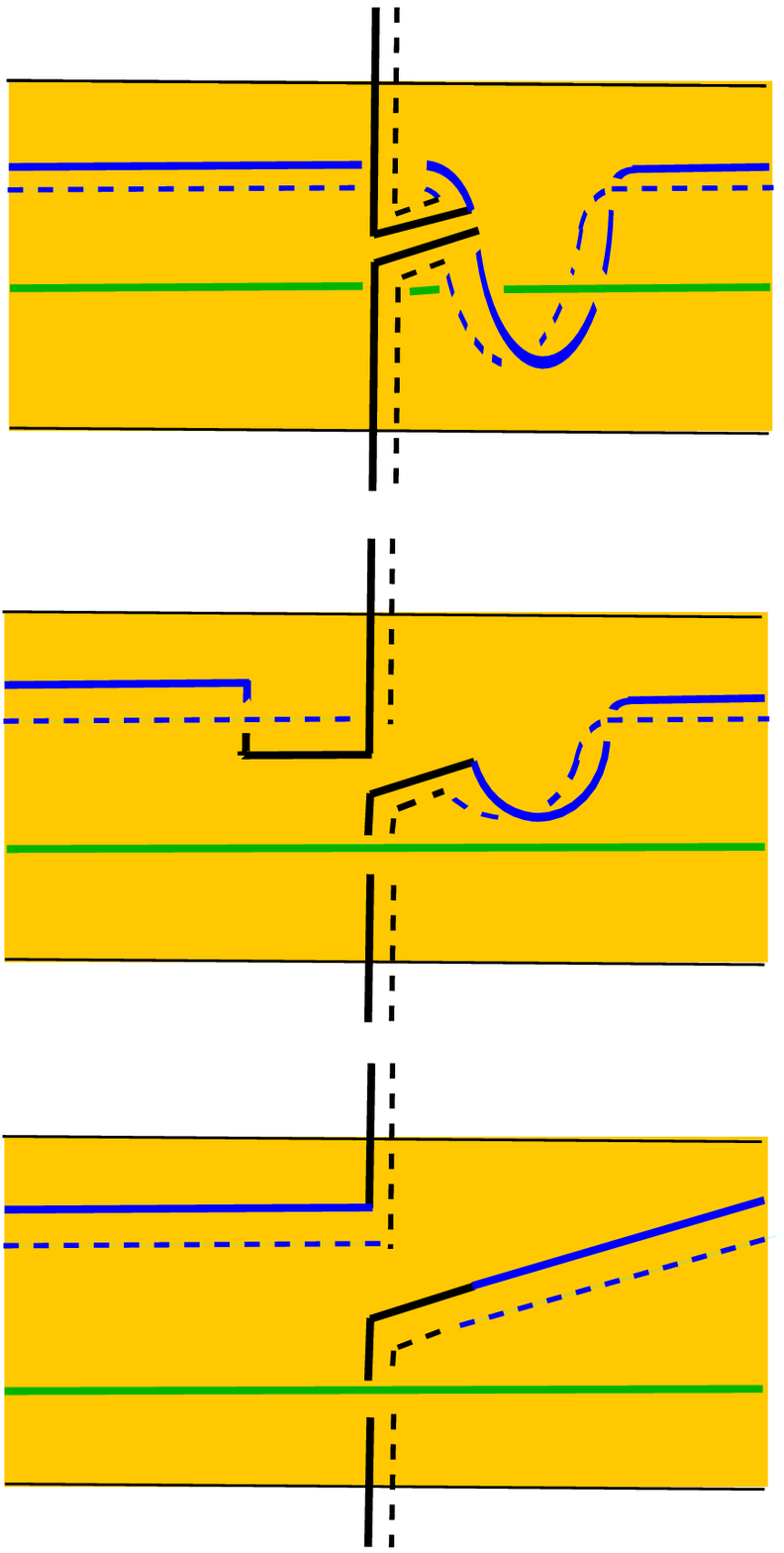}
    \caption{} \label{fig:cross}
    \end{figure}

The examples $L_{n,k}$ from Section~\ref{sect:nonstandard} form a family as in Proposition~\ref{prop:Ln} for each fixed $k$, generated by the handle-slide trivial case $n=0$. Figure \ref{fig:gompf6} shows the starting point.  It is basically the right side of Figure \ref{fig:Gompffig3b} with both a twist-box and one of the 0-framed links moved to a more symmetric position.

 \begin{figure}[ht]
  \labellist
\small\hair 2pt
\pinlabel  $k$ at 80 130
\pinlabel  $-k$ at 275 130
\pinlabel  $n$ at 170 160
\pinlabel  $-n$ at 170 50
\pinlabel  $[1]$ at 20 245
\pinlabel  $[-1]$ at 210 180
\pinlabel  $0$ at 175 115
\pinlabel  $0$ at 25 130
\endlabellist
     \centering
    \includegraphics[scale=0.7]{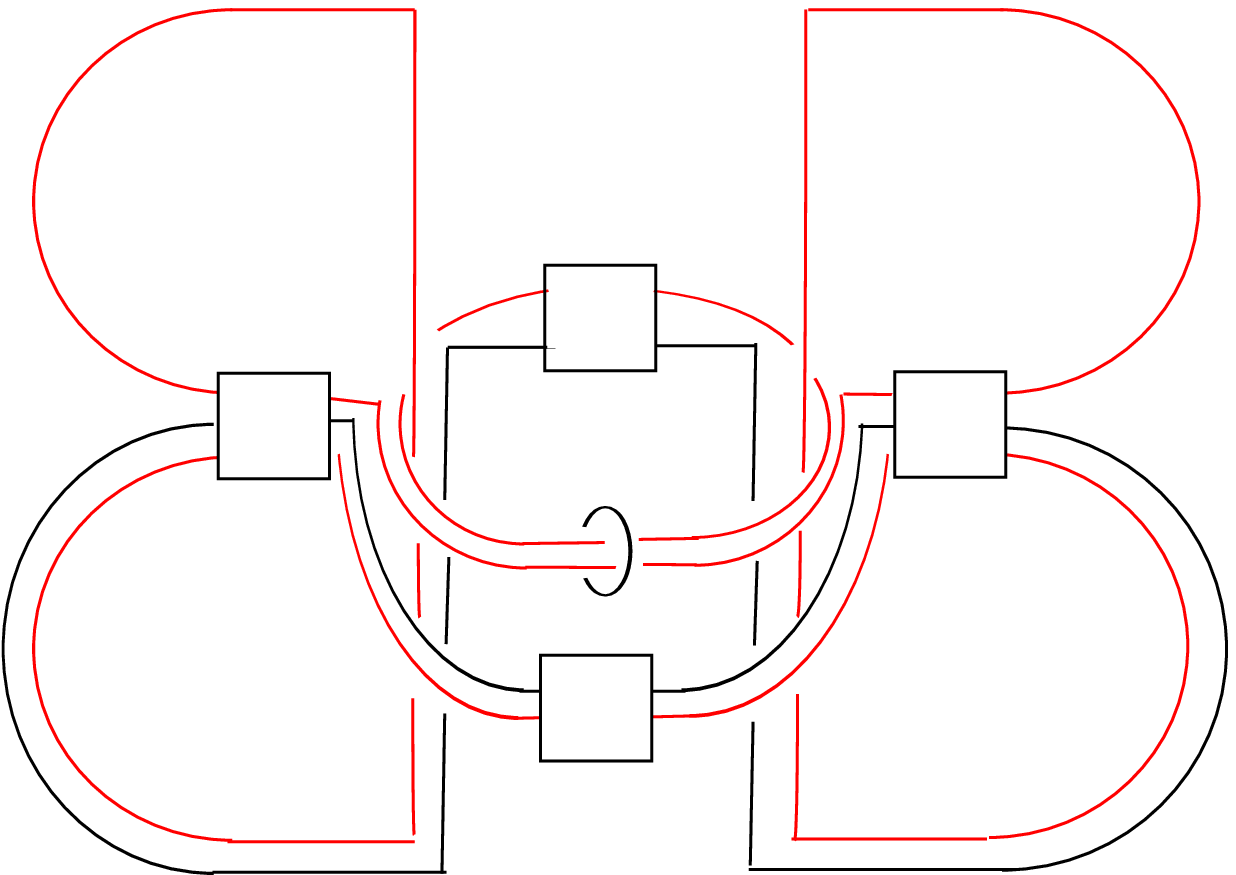}
    \caption{} \label{fig:gompf6}
    \end{figure}

Let $M$ be the manifold diffeomorphic to $S^3$ obtained by surgery on the bracketed (red) circles. (Recall that these form an unlink in $S^3$.) Let $L'$ and $L''_n$ be the small and large 0-framed circles, respectively. There is a vertical plane, perpendicular to the page, that bisects the picture.
This plane bisects both $\pm n$ twist boxes and contains $L'$. We interpret this plane as a 2-sphere in the background $S^3$. Surgery on $L'$ splits the sphere into a disjoint pair of spheres. Neither of these lies in $M'$ since each is punctured by both bracketed circles. However, we can remove the punctures by tubing the spheres together along both bracketed curves, obtaining an embedded torus in $M'$. The fibers $C_t$ of this torus include a meridian of each bracketed circle, and the torus induces the 0-framing on these relative to the background $S^3$. To see these in $M$, ignore the link $L_n$ and unwind the bracketed link. Blowing down reveals the 0-framed meridian of the $[\mp 1]$-framed curve to be a $\pm 1$-framed unknot in $M$ (see \cite[Figure 5.18]{GS}). Thus, we can choose either meridian to be $C_0$, with framing $\pm 2$ in $M$, i.e., $\pm 1$ in $S^3$. (In either case, the framing in $S^3$ is preserved as we travel around the torus, so it becomes 0 in $M$ when we reach the other meridian, showing that we cannot get off the ride in the middle.) Since $L_n''$ intersects the torus at two points, at the $\pm n$-twist boxes, it is easy to verify that the procedure turns $L_n^*$ into $L_{n\pm 1}^*$, with the sign depending both on the choice of $C_0$ and the direction of motion around the torus. It is instructive to view the procedure explicitly as a sequence of four handle slides in Figure~\ref{fig:gompf6}.

It is not completely obvious how to trivialize the link by handle slides when $n=0$. In the $k=1$ case, Figures \ref{fig:Gompffig4} and \ref{fig:Gompffig5} isotope the link to Figure \ref{fig:squareknot}, solving the problem with a single handle slide. For general $k$, it is helpful to begin from a somewhat simpler picture of $L_{n,k}$, Figure~\ref{fig:example}, which is obtained from Figure \ref{fig:gompf6} by an isotopy that rotates both $\pm k$ twist boxes 180 degrees outward about vertical axes. Now set $n = 0$ and consider the two handle slides in Figure~\ref{fig:example2} (following the arrows, framed by the plane of the paper).  The slides first complicate $L'$, but after an isotopy shrinking the strands shown in green, $L'$ simplifies,  so that a slide over the $[-1]$-framed circle changes it to a meridian of the $[+1]$-framed circle. The $\pm k$-twist boxes then cancel, showing that the link is trivial.

\begin{figure}[ht]
     \labellist
\small\hair 2pt
\pinlabel  $k$ at 75 150
\pinlabel  $-k$ at 275 150
\pinlabel  $n$ at 170 100
\pinlabel  $-n$ at 170 20
\pinlabel  $[1]$ at 25 220
\pinlabel  $[-1]$ at 220 220
\pinlabel  $0$ at 180 200
\pinlabel  $0$ at 25 100
\endlabellist
 \centering
    \includegraphics[scale=0.7]{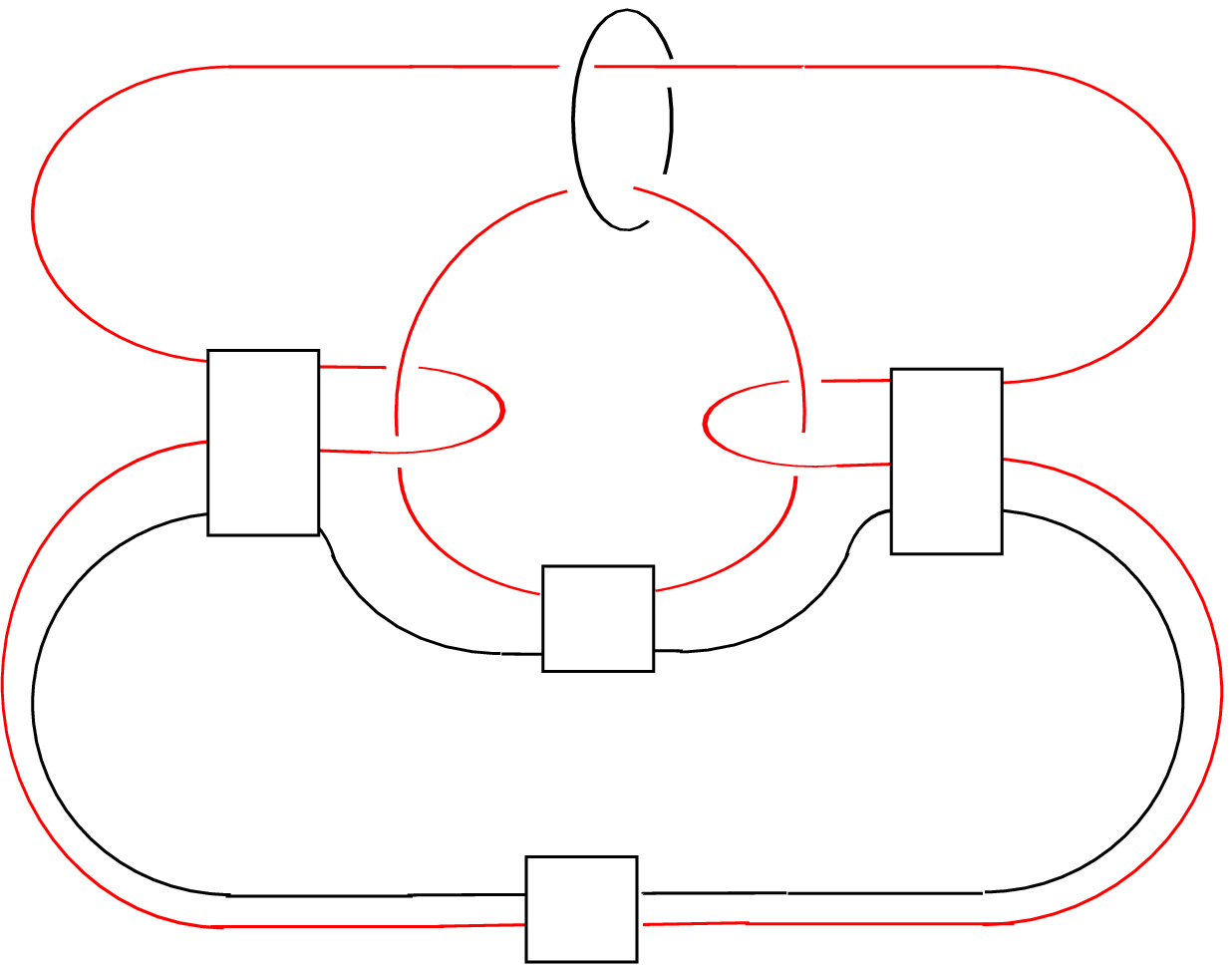}
    \caption{} \label{fig:example}
    \end{figure}

    \begin{figure}[ht]
     \labellist
\small\hair 2pt
\pinlabel  $0$ at 105 433
\pinlabel  $0$ at 105 387
\endlabellist
     \centering
    \includegraphics[scale=0.7]{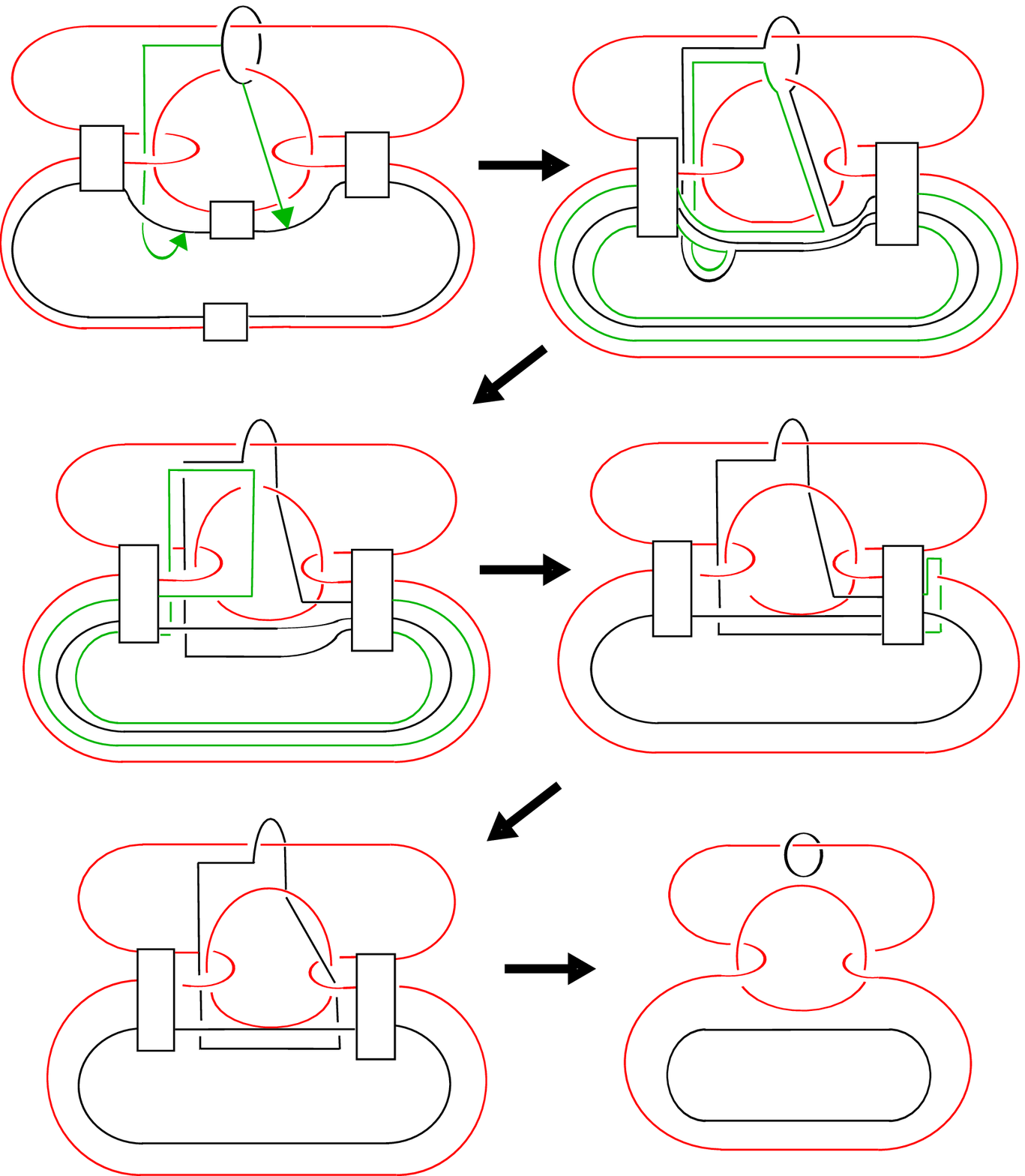}
    \caption{} \label{fig:example2}
    \end{figure}

This construction was derived from the dual construction in \cite{Go1}, showing that the relevant homotopy balls are standard by introducing a canceling 2-handle/3-handle pair. While it was not easy to explicitly dualize the construction, in the end the two constructions appear rather similar. The construction in \cite{Go1} also involves introducing a new link component and moving it around a torus. In that case, the new component was a 0-framed unknot, the attaching circle of the new 2-handle.  Akbulut's recent proof \cite{Ak} that infinitely many Cappell-Shaneson homotopy 4-spheres are standard again relies on such a trick. In each case, the key seems to be an embedded torus with trivial
normal bundle, to which a 2-handle is attached with framing $\pm 1$, forming
a {\em fishtail neighborhood}. This neighborhood has self-diffeomorphisms
that can undo certain cut-and-paste operations on the 4-manifold, as we implicitly
saw in the case of Proposition~\ref{prop:Ln} with $\varphi$ an embedding.
While this procedure has been known in a different context for at least
three decades, underlying the proof that simply elliptic surfaces with fixed $b_2$ are
determined by their multiplicities, it still appears to be underused. In
fact, one can obtain a simpler and more general proof that many Cappell-Shaneson
4-spheres are standard by going back to their original definition and
directly locating fishtail neighborhoods \cite{Go2}.

\end{document}